%
%
%
%
\documentclass{amsart}
\usepackage{amsmath,amscd}

\newtheorem{theorem}{Theorem}[section] 
\newtheorem{lemma}[theorem]{Lemma}

\newtheorem{proposition}[theorem]{Proposition}

\theoremstyle{definition}

\theoremstyle{remark}

\numberwithin{equation}{section}



\begin{document}

\title{The Continuous Spectrum in Discrete Series Branching Laws}

\author{Benjamin Harris}
\address{Department of Mathematics, Louisiana State University, Baton Rouge, Louisiana 70803}
\email{blharris@lsu.edu}
\thanks{The first author was an NSF VIGRE postdoc at LSU while this research was conducted.}

\author{Hongyu He}
\address{Department of Mathematics, Louisiana State University, Baton Rouge, Louisiana 70803}
\email{hongyu@math.lsu.edu}

\author{Gestur \'{O}lafsson}
\address{Department of Mathematics, Louisiana State University, Baton Rouge, Louisiana 70803}
\email{olafsson@math.lsu.edu}
\thanks{The third author was supported by NSF grant 1101337 while this research was conducted.}

\subjclass[2000]{22E46}

\date{September 10, 2012}


\keywords{Discrete Series, Branching Law, Continuous Spectrum, Reductive Lie Group, Finite Multiplicity, Square Integrable Representation, Symmetric Subgroup}

\begin{abstract}
If $G$ is a reductive Lie group of Harish-Chandra class, $H$ is a symmetric subgroup, and $\pi$ is a discrete series representation of $G$, the authors give a condition on the pair $(G,H)$ which guarantees that the direct integral decomposition of $\pi|_H$ contains each irreducible representation of $H$ with finite multiplicity. In addition, if $G$ is a reductive Lie group of Harish-Chandra class, and $H\subset G$ is a closed, reductive subgroup of Harish-Chandra class, the authors show that the multiplicity function in the direct integral decomposition of $\pi|_H$ is constant along `continuous parameters'. In obtaining these results, the authors develop a new technique for studying multiplicities in the restriction $\pi|_H$ via convolution with Harish-Chandra characters. This technique has the advantage of being useful for studying the continuous spectrum as well as the discrete spectrum.
\end{abstract}

\maketitle

\section{Introduction}

If $\pi$ is an irreducible representation of a Lie group $G$ and $H\subset G$ is a type 1 closed Lie subgroup, it is natural to consider the restriction $\pi|_H$ and to decompose this restriction into irreducible representations of $H$. This type of problem comes up naturally in physics; see for instance \cite{Ok}, \cite{RWB}. It also arises naturally in number theory; see for instance section 9 of \cite{OS}, \cite{B}, or \cite{KOd}. In the first case, the multiplicities in this decomposition should correspond to physical quantities and in the second case they should correspond to dimensions of spaces of automorphic forms. Either way, one observes that many branching laws of particular significance have the property that $\pi|_H$ has finite multiplicities.

In \cite{Ko1} and \cite{Ko2}, Kobayashi gives a sufficient condition for an irreducible representation of a real reductive Lie group $G$ of type $\overline{A_{\mathfrak{q}}(\lambda)}$ to restrict discretely to a closed reductive subgroup of Harish-Chandra class, $H$. In \cite{Ko3}, he gives a necessary condition for infinitesimal discrete decomposability, and he shows that the necessary and sufficient conditions agree when $(G,H)$ is a symmetric pair. In addition, he shows that when $(G,H)$ is a symmetric pair and the restriction is infinitesimally discretely decomposable, the branching law $\overline{A_{\mathfrak{q}}(\lambda)}|_H$ contains each irreducible $H$ representation with finite multiplicity. More recently, it has been shown that if $(G,H)$ is a strong Gelfand pair of real, reductive algebraic groups and $\pi$ is an irreducible, unitary representation of $G$, then the discrete spectrum of $\pi|_H$ contains each irreducible representation of $H$ with multiplicity at most one (see \cite{SZ} and the references therein). Finally, in the recent preprint \cite{KO}, Kobayashi and Oshima give a sufficient condition on pairs of reductive Lie groups $(G,H)$ which guarantees that $\pi|_H$ contains finite multiplicities in the discrete spectrum of $H$ for each irreducible, unitary representation $\pi$ of $G$. 

These are all very powerful finite multiplicity theorems for branching laws. However, in each case, the finite multiplicity theorem only applies to the discrete part of the direct integral decomposition $\pi|_H$. In this paper, we attempt to study the continuous spectrum of branching laws in a very special case. In order to do this, we make use of many recent ideas of \O rsted and Vargas \cite{OV1},\cite{OV2},\cite{V2}. 

In this paper we will work with reductive Lie groups of Harish-Chandra class. For the sake of completeness, we give the definition of a reductive Lie group of Harish-Chandra class (taken from pages 105-106 of \cite{HC9} where this class of groups was introduced). We say that $G$ is a \emph{reductive Lie group of Harish-Chandra class} if $G$ satisfies the following four conditions:
\begin{itemize}
\item 
$\operatorname{Lie}(G)=\mathfrak{g}$ is a reductive Lie algebra.
\item
$\operatorname{Ad}(G)\subset \operatorname{Int}\mathfrak{g}_{\mathbb{C}}$ where $\mathfrak{g}_{\mathbb{C}}\cong \mathfrak{g}\otimes \mathbb{C}$ is the complexification of $\mathfrak{g}$ and $\operatorname{Int}\mathfrak{g}_{\mathbb{C}}$ is the connected, analytic subgroup of $\operatorname{Aut}\mathfrak{g}_{\mathbb{C}}$ with Lie algebra $\operatorname{Der}\mathfrak{g}_{\mathbb{C}}$, the set of derivations of $\mathfrak{g}_{\mathbb{C}}$.
\item
If $G_1$ is the connected, analytic subgroup of $G$ with Lie algebra $\mathfrak{g}_1=[\mathfrak{g},\mathfrak{g}]$, then the center of $G_1$ is finite.
\item
$[G:G_e]<\infty$ where $G_e$ denotes the identity component of $G$.
\end{itemize}

Here are our results.

\begin{theorem} Suppose $G$ is a reductive Lie group of Harish-Chandra class and suppose $H\subset G^{\tau}\subset G$ is an open subgroup of a symmetric subgroup of $G$. Choose a maximal compact subgroup $K\subset G$ such that $H\cap K\subset H$ is maximal compact, and denote by $\theta$ the associated Cartan involutions of both $G$ and $\mathfrak{g}$. Let $\mathfrak{s}$ be the $-1$ eigenspace of $\theta$ on $\mathfrak{g}$, and let $\mathfrak{q}$ be the  $-1$ eigenspace of $\tau$ on $\mathfrak{g}$. Suppose $H\cong K_0\times H_1$ where $K_0$ is compact and $H_1$ is noncompact. Let $\mathfrak{b}\subset \mathfrak{h}_1\cap \mathfrak{s}$ be a maximal abelian subspace, and let $\mathfrak{a}\subset \mathfrak{s}\cap \mathfrak{q}$ be a maximal abelian subspace. Assume 
$$Z_{H_1\cap K}(\mathfrak{a})Z_{H_1\cap K}(\mathfrak{b})=H_1\cap K.$$
If $\pi$ is a discrete series representation of $G$, then the direct integral decomposition of the restriction $\pi|_H$ has finite multiplicities.
\end{theorem}

Observe that, under slightly weaker hypotheses, it is proved in Theorem 4 of \cite{OV1} that the discrete spectrum of $\pi|_H$ has finite multiplicities. In the last sentence of page 628 in \cite{OV1}, the authors remark that they have not yet determined whether the irreducible continuous factors occur with finite multiplicity in the restriction of a discrete series representation of $\operatorname{Spin}(2n,1)$ to the subgroup $\operatorname{Spin}(2k)\times \operatorname{Spin}(2n-2k,1)$. This generalization of their result answers their question in the affirmative.

One checks that the symmetric pairs $(O(2n,1),O(k)\times O(2n-k,1))$, $(U(n,1),$ $U(k)\times U(n-k,1))$, $(\operatorname{Sp}(n,1),\operatorname{Sp}(k)\times \operatorname{Sp}(n-k,1))$ are all examples of symmetric pairs satisfying the hypotheses of the Theorem. We note that in \cite{KO}, a class of pairs $(G,H)$ was identified such that $\pi|_H$ has finite multiplicities in the discrete spectrum for every irreducible unitary representation $\pi$ of $G$. The above examples are also examples of the class of pairs $(G,H)$ identified in \cite{KO}. However, the class in \cite{KO} contains additional pairs including $(SO(p,q),SO(p-1,q))$ for $p,q>1$ that are not covered by the above Theorem. Most likely these pairs also satisfy a finite multiplicity theorem for the continuous spectrum in discrete series branching laws.

The proof of the Theorem utilizes many techniques from \cite{OV1}. One new technique involves convolution of certain Harish-Chandra Schwartz functions on $G$ with Harish-Chandra characters of irreducible, tempered representations of $H$. This technique is also helpful in proving the following result.

\begin{theorem} Suppose $G$ is a reductive Lie group of Harish-Chandra class, and $H\subset G$ is a closed reductive subgroup of Harish-Chandra class. Let $P=MAN\subset H$ be a cuspidal parabolic subgroup, let $\delta$ be a discrete series representation of $M$, and let $(\widehat{A})'$ be the set of $H$ regular characters of $A$. Let $\sigma(\delta,\nu)$ be the family of irreducible tempered representations corresponding to $\delta\in \widehat{M}$ and $\nu\in (\widehat{A})'$ (see Section 3 for notation). Let $\pi$ be a discrete series representation of $G$, and let $m(\sigma(\delta,\nu),\pi|_H)$ be the multiplicity of the irreducible tempered representation of $\sigma(\delta,\nu)$ in $\pi|_H$. Then $m(\sigma(\delta,\nu),\pi|_H)$ is constant as a function of $\nu\in (\widehat{A})'$. 
\end{theorem}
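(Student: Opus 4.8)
The plan is to detect the multiplicity $m(\sigma(\delta,\nu),\pi|_H)$ by pairing a fixed $K$-finite matrix coefficient of $\pi$ against test functions on $H$, and then comparing the direct‑integral decomposition of $\pi|_H$ with the Harish--Chandra Plancherel formula on $H$. Concretely, for a $K$-type $\tau$ let $E_\tau=\pi(e_\tau)$ be the orthogonal projection onto the ($\tau$-isotypic, finite dimensional) subspace $V_\tau\subset\mathcal H_\pi$, and set $\Phi_\tau(g)=\operatorname{tr}(E_\tau\,\pi(g)\,E_\tau)$, a finite sum of matrix coefficients of $\pi$. Since $\pi$ is square integrable, $\Phi_\tau\in\mathcal C(G)$, the Harish--Chandra Schwartz space. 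The first thing I would establish is that $\Phi_\tau|_H\in\mathcal C(H)$; this is the analytic crux and is proved by comparing Harish--Chandra's $\Xi$-functions $\Xi_G|_H$ and $\Xi_H$ and using the extra exponential decay of discrete‑series matrix coefficients along the split directions. Equivalently, and all that is really needed, the partial convolution along $H$ of $e_\tau\in C_c^\infty(G)$ with $f\in\mathcal C(H)$, namely $g\mapsto\int_H e_\tau(gh^{-1})f(h)\,dh$, again lies in $\mathcal C(G)$.

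Next I would use the identity
\[
\operatorname{tr}\big[E_\tau\,(\pi|_H)(f)\big]=\int_H f(h)\,\Phi_\tau(h)\,dh,\qquad f\in\mathcal C(H),
\]
which is valid because $E_\tau\circ(\pi|_H)(f)$ is trace class (finite rank composed with a bounded operator); by the global character the left side is also $\Theta_\pi$ applied to the partial convolution $g\mapsto\int_H e_\tau(gh^{-1})f(h)\,dh$. I would then expand the left side in two ways. First, through $\pi|_H=\int_{\widehat H}^\oplus m(\sigma)\,\sigma\,d\mu(\sigma)$: with an orthonormal basis $\{v_j\}$ of $V_\tau$ and $v_j=\int^\oplus v_j(\sigma)\,d\mu(\sigma)$ one obtains $\operatorname{tr}[E_\tau(\pi|_H)(f)]=\int_{\widehat H}\operatorname{tr}[\sigma(f)\,\widetilde Q_\tau(\sigma)]\,d\mu(\sigma)$, where $\widetilde Q_\tau(\sigma)$ is the partial trace over the multiplicity space $\mathbb C^{m(\sigma)}$ of $\sum_j|v_j(\sigma)\rangle\langle v_j(\sigma)|$, a positive, finite-rank, $(H\cap K)$-invariant operator on $\mathcal H_\sigma$. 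Second, since $\Phi_\tau|_H\in\mathcal C(H)$, the Harish--Chandra Plancherel formula for $H$ expands $\int_H f\,\Phi_\tau$ as an integral against the Plancherel measure of $H$ whose operator density is $\sigma(\Phi_\tau)=\int_H\Phi_\tau(h)\sigma(h)\,dh$ (up to the usual conjugation conventions). Comparing these and using uniqueness of the Plancherel expansion on $\mathcal C(H)$, I would deduce that the continuous part of $\mu$ is absolutely continuous with respect to the Plancherel measure of $H$, and that $\widetilde Q_\tau(\sigma)$ agrees with $\sigma(\Phi_\tau)$, up to conjugation, for $\mu$-almost every $\sigma$.

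Specializing to the family $\sigma=\sigma(\delta,\nu)$: since $\Phi_\tau$ is $K$-finite of type $\tau$ on both sides, $\sigma(\delta,\nu)(\Phi_\tau)$ is supported on the $H\cap K$-types occurring in $\tau|_{H\cap K}$, and both these types and their multiplicities in $\sigma(\delta,\nu)|_{H\cap K}=\operatorname{Ind}_{P\cap K}^{H\cap K}(\delta|_{M\cap K})$ are independent of $\nu$. Thus $\nu\mapsto\sigma(\delta,\nu)(\Phi_\tau)$ takes values in $\operatorname{End}(W)$ for a fixed finite‑dimensional $H\cap K$-module $W$, and, through the standard holomorphic realization of the family $\sigma(\delta,\nu)$, it depends real‑analytically on $\nu\in i\mathfrak a^*$ (indeed holomorphically on a complex tube, using the exponential decay of $\Phi_\tau|_H$). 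Meanwhile $\widetilde Q_\tau(\sigma(\delta,\nu))$, assembled from the multiplicity spaces $\mathbb C^{m(\sigma(\delta,\nu))}$, has rank and internal structure governed by $m(\sigma(\delta,\nu))$ together with the $\nu$-independent integers $[\sigma(\delta,\nu)|_{H\cap K}:\tau']$. I would then argue that, letting $\tau$ range over all $K$-types (or taking a supremum of ranks), $m(\sigma(\delta,\nu))$ is recovered --- up to a $\nu$-independent normalization --- as the rank of the real‑analytic family $\nu\mapsto\sigma(\delta,\nu)(\Phi_\tau)$ for suitable $\tau$.

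The rank of a real‑analytic family of operators equals its global maximum off a proper real‑analytic subvariety $S\subset i\mathfrak a^*$, and on the complement $i\mathfrak a^*\setminus S$ it is identically that maximum. To finish I would show $S$ is contained in the union of the $\mathfrak a$-root hyperplanes of $\mathfrak h$ --- the $H$-singular locus --- so that $(\widehat A)'=i\mathfrak a^*_{\mathrm{reg}}\subset i\mathfrak a^*\setminus S$ and $m(\sigma(\delta,\nu))$ is constantly the generic value on all of $(\widehat A)'$; this also handles the case $\dim A=1$, where $(\widehat A)'$ is disconnected, since the rank can only drop on the excluded hyperplanes. I expect this last step --- excluding any accidental degeneration of the multiplicity at a regular unitary parameter --- to be the main obstacle; it should follow from the irreducibility of $\sigma(\delta,\nu)$ for $\nu\in(\widehat A)'$ (which forces the relevant period/intertwining data to be as nondegenerate as it can be) together with the positivity and real‑analyticity of $\widetilde Q_\tau$ along $i\mathfrak a^*$. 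The other delicate point is the passage from the weighted operators $\widetilde Q_\tau$ to the multiplicity function itself, which is exactly where one must contend with the fact that the $K$-type projectors $E_\tau$ fail to be decomposable for the $H$-action.
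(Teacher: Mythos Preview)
Your proposal shares the right core idea with the paper --- detect the multiplicity via Schwartz data restricted to $H$, pair against the family $\sigma(\delta,\nu)$, and exploit real-analyticity in $\nu$ --- but the two ``delicate points'' you flag are not side issues; they are precisely where your outline does not yet yield a proof, and the paper handles both of them by different (and simpler) devices.

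\medskip
\textbf{Recovering $m(\sigma)$ from $\sigma(\Phi_\tau|_H)$.} Your own trace identity shows $\widetilde{Q}_\tau(\sigma)=\sigma(\Phi_\tau|_H)$ (once the measure is normalized to Plancherel). But the right-hand side is an operator on $\mathcal{H}_\sigma$ determined solely by the function $\Phi_\tau|_H$ and by $\sigma$; the partial trace over the multiplicity space $\mathbb{C}^{m(\sigma)}$ has erased exactly the information you need. The rank of $\sigma(\Phi_\tau|_H)$ is bounded by the dimensions of the $H\cap K$-isotypic pieces of $\sigma$ that meet $\tau|_{H\cap K}$, numbers which are independent of $\nu$ \emph{and} of $m(\sigma)$. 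So ``rank of $\sigma(\delta,\nu)(\Phi_\tau)$ up to a $\nu$-independent normalization'' does not compute $m(\sigma(\delta,\nu))$, and taking a supremum over $K$-types $\tau$ of $G$ does not help: for each $\tau$ the quantity you form lives entirely on $\mathcal{H}_\sigma$, with no memory of how many copies of $\sigma$ sit inside $\pi|_H$. The paper avoids this by never taking a partial trace. It uses the \O rsted--Vargas restriction-with-jets maps $r_m$ to filter $\pi|_H$ by closed $H$-invariant subspaces, each of which embeds $H$-equivariantly into $L^2(H)\otimes V$ with $V$ finite dimensional, and then convolves with $\Theta_{\sigma}$ and projects onto a \emph{minimal} $H\cap K$-type $\mu$ of $\sigma$. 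Because $\mu$ occurs in $\sigma$ with multiplicity one, the dimension of the resulting image is exactly $m(\sigma,\ker(U_m)^\perp)$, and $m(\sigma,\pi|_H)$ is the increasing limit in $m$. This gives a concrete formula for the multiplicity as a limit of dimensions of explicit finite sets of functions, which is what the analyticity argument is then applied to.

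\medskip
\textbf{Controlling the degeneration locus.} You aim to show that the zero set $S$ of your analytic rank function lies in the $\mathfrak{a}$-root hyperplanes, and you correctly identify this as the main obstacle; irreducibility of $\sigma(\delta,\nu)$ at regular $\nu$ gives no obvious leverage here. The paper does not attempt this and does not need to. Once one has $m(\sigma(\delta,\nu),\pi|_H)=\lim_m\dim(\cdots)$ with each finite-$m$ term detected by an analytic (in $\nu$) determinant that is nonzero at some $\nu_0$, one gets $m(\sigma(\delta,\nu),\pi|_H)\geq N$ for almost every $\nu$, where $N$ is any value attained. Since the multiplicity function in a direct integral is only defined up to null sets, almost-everywhere constancy is the full statement; this also dissolves the worry about $(\widehat{A})'$ being disconnected.
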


In \cite{V2}, Vargas considers $m(\sigma(\delta,\nu),\pi|_H)$ as a measurable function of $\nu$ for fixed $\delta$. In Proposition 2, he shows that whenever $m(\sigma(\delta,\nu),\pi|_H)$ is not identically zero almost everywhere as a function of $\nu$, it is zero on a set of measure zero. Thus, our Theorem is a slight generalization of the one obtained in \cite{V2}.

Finally, we attempt to use Kahler geometry to obtain more information about the multiplicities in discrete series branching laws in the special case $(U(n,1),U(1)\times U(n-1,1))$. Let $\psi_m(e^{i\theta})=e^{im\theta}$ denote the unitary character of $U(1)$ with parameter $m\in \mathbb{Z}$. 

\begin{proposition} Suppose $\pi$ is a discrete series representation of $G=U(n,1)$, and consider the symmetric subgroup $H=U(1)\times U(n-1,1)$. Decompose $$\pi|_{U(1)}\cong \bigoplus_{\psi_m\in \widehat{U(1)}} \psi_m\otimes \operatorname{Hom}_{U(1)}(\psi_m,\pi)=\bigoplus_{\psi_m\in \widehat{U(1)}} \psi_m\otimes\tau_m$$
into irreducibles under the action of $U(1)$. Here $\tau_m$ is a unitary representation of $U(n-1,1)$. Next, write
$$\tau_m=\int_{\sigma(\delta,\nu)\in \widehat{U(n-1,1)}_{\text{temp.}}} m(\sigma(\delta,\nu),\tau_m)\sigma(\delta,\nu)$$
as a direct integral of irreducible representations of $U(n-1,1)$ (See Section 3 for notation). Then $m(\sigma(\delta,\nu),\tau_m)\neq 0$ for finitely many elliptic parameters $\delta$ and $m(\sigma(\delta,\nu), \tau_m)<\infty$ for all $\sigma,\nu$. In particular, at most finitely many discrete series occur in $\tau_m$. 
\end{proposition}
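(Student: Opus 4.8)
\emph{Proof proposal.}
The finiteness of the multiplicities is essentially formal. The symmetric pair $(U(n,1),\,U(1)\times U(n-1,1))$ occurs in the list of examples satisfying the hypotheses of the first Theorem above, so the direct integral decomposition of $\pi|_H$ has finite multiplicities; since every irreducible tempered representation of $H=U(1)\times U(n-1,1)$ is of the form $\psi_m\otimes\sigma(\delta,\nu)$ and its multiplicity in $\pi|_H$ equals $m(\sigma(\delta,\nu),\tau_m)$, all of these numbers are finite, and by the second Theorem above each is constant in $\nu$ on the regular set. Thus the whole point is that only finitely many elliptic parameters $\delta$ occur; in particular only finitely many discrete series of $U(n-1,1)$ occur. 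When $\pi$ is a holomorphic or antiholomorphic discrete series this is immediate: $\pi|_H$ is then discretely decomposable of holomorphic type, and realizing $\pi$ on holomorphic $L^2$-sections over $B^n$ and passing to $\psi_m$-isotypic parts identifies each $\tau_m$ with a single irreducible holomorphic discrete series of $U(n-1,1)$, or limit of discrete series, or $0$. The remaining ``middle'' discrete series are where the continuous spectrum appears, and here I would use K\"ahler geometry more seriously.

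Realize $G/K$ as the unit ball $B^n\subset\mathbb{C}^n$ and $\pi$ as the $L^2$-Dolbeault cohomology $H^{(0,q_0)}_{(2)}(B^n,\mathcal{W})$ of a homogeneous holomorphic vector bundle, in the degree $q_0$ attached to the Harish-Chandra parameter of $\pi$ (every discrete series of $U(n,1)$ is obtained this way). In coordinates $(z_1,z')$ adapted to $H$, the factor $U(1)$ rotates $z_1$ and $U(n-1,1)$ acts through the $H$-equivariant holomorphic disk bundle $p\colon B^n\to B^{n-1}$, $(z_1,z')\mapsto z'$. The $H$-orbits are the level sets of the invariant $f=|z_1|^2/(1-\|z'\|^2)\in[0,1)$; the level set $\{f=0\}$ is the complex submanifold $B^{n-1}=\{z_1=0\}$, the zero section of $p$, which is $H$-equivariantly $U(n-1,1)/K'$ with $K'=U(n-1)\times U(1)$, and it is the unique complex $H$-orbit, the remaining orbits $\{f=c\}$, $0<c<1$, being real hypersurfaces of the form $H/(\text{compact})$.

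The plan is to show that the discrete summands of $\tau_m$ are localized along the complex orbit. Using harmonic representatives and the (real-analytic) normal Taylor expansion of a harmonic class along $B^{n-1}$ produces an $H$-equivariant map; its $\psi_m$-component sends a discrete series summand $\sigma\subset\tau_m$ to its leading jet, which — up to conjugating the complex structure — is a nonzero holomorphic $L^2$-cohomology class of a homogeneous bundle $\mathcal{W}|_{B^{n-1}}\otimes\mathcal{N}^{\otimes\ell}$ over $U(n-1,1)/K'$, where $\mathcal{N}$ is the conormal bundle and $\ell$ the order of vanishing along $B^{n-1}$. The $L^2$-cohomology of such a bundle over the Riemannian symmetric space $U(n-1,1)/K'$ is a finite sum of irreducible discrete series and limits of discrete series of $U(n-1,1)$, and for $\ell$ large the conormal twist pushes the parameter out of the nonvanishing range; hence only finitely many $\ell$, and only finitely many discrete series of $U(n-1,1)$, can occur in $\tau_m$, each (by the first Theorem) with finite multiplicity. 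For the continuous part of $\tau_m$, one runs a parallel analysis along the one-parameter family $\{f=c\}$; since the $K'$-type support of $\tau_m$ lies in a single central-character slab, and this slab, together with the cone of $K$-types of $\pi$, pins down which families $\{\sigma(\delta,\nu)\}_\nu$ can appear, only finitely many elliptic parameters $\delta$ remain.

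The main obstacle is precisely the localization and boundedness-of-vanishing step: that a discrete series summand of $\tau_m$ has bounded order of vanishing along $B^{n-1}$ and that its leading jet survives as a nonzero $L^2$-cohomology class over $U(n-1,1)/K'$. The fibration $p$ is not proper — the disk fibers run out to $\partial B^n$ — so there is no Leray or pushforward shortcut; instead one must estimate the $L^2$-norm of a harmonic representative in a tubular neighborhood of $B^{n-1}$ and rule out that all its mass escapes into the part that is flat along the complex orbit. This is the type of estimate developed by \O rsted and Vargas, and it is where the bulk of the work lies; the remaining combinatorial bookkeeping — identifying exactly the finitely many conormal twists and the finitely many $\delta$ — is routine once the localization is established.
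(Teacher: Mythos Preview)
Your geometric setup is the paper's: realize $\pi$ in the $\overline{\Box}$-harmonic $L^2$ Dolbeault model on $D_n$ and study the normal Taylor expansion along $D_{n-1}$ via the \O rsted--Vargas jet maps $r_l$. But you correctly flag, and then leave unresolved, the central point: why is the order of vanishing along $D_{n-1}$ uniformly bounded on $(\pi_\lambda)_m$? Your proposed workarounds --- treating discrete summands one at a time via their leading conormal jets and then arguing that large conormal twists have vanishing target cohomology, followed by a separate level-set/$K'$-type analysis for the continuous spectrum --- are vague where it matters and, as it turns out, unnecessary. The paper gives a short direct argument that handles discrete and continuous spectrum simultaneously, with no $L^2$ tube estimates and no case split between holomorphic and middle discrete series.

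The idea you are missing is that the $\psi_m$-isotypy \emph{itself} bounds the vanishing order. Decompose the fiber $V_{\lambda-\widetilde{\rho}}=\bigoplus_s V^s$ under $U(1)$. Since $k_\theta$ acts on the normal coordinate by $z\mapsto e^{i\theta}z$ and on $V^s$ by $e^{is\theta}$, the $V^s$-component of any $F\in(\pi_\lambda)_m$ has its Taylor expansion in $z,\overline{z}$ supported on monomials $z^j\overline{z}^l$ with $l-j=m-s$ (or $m-s+1$ for the $d\overline{z}$ part). Separately, harmonicity forces the \emph{leading} normal term to be pure: $\overline{\partial}F=0$ makes the degree-$k$ term of the $F_0$ part proportional to $z^k$, and a coordinate computation of $\overline{\partial}^*=-\#*\overline{\partial}*\#$ makes the degree-$k$ term of the $F_1$ part proportional to $\overline{z}^k$. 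A pure monomial $z^k$ or $\overline{z}^k$ subject to $l-j\in\{m-s,\,m-s+1\}$ for some weight $s$ of the finite-dimensional fiber has $k\leq |m-s|+1$. Hence with $l=|m-s|+1$ (for $s$ the extremal $U(1)$-weight of the fiber) the map $r_l$ is already injective on all of $(\pi_\lambda)_m$. Polar decomposition then unitarily embeds $\tau_m$ into $L^2(D_{n-1},\mathcal{W})$ for a \emph{single} finite-rank homogeneous bundle $\mathcal{W}$, and Frobenius reciprocity on that target gives both conclusions (finitely many $\delta$, finite multiplicities) at once.
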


One could ask if a similar Proposition holds for all of the symmetric pairs $(O(2n,1),O(k)\times O(2n-k,1))$, $(U(n,1),U(k)\times U(n-k,1))$, and $(\operatorname{Sp}(n,1),\operatorname{Sp}(k)\times \operatorname{Sp}(n-k,1))$. While this may be the case, attempting to prove this generalization using techniques similar to those in the last section of this paper would be computationally unpleasant at best.

We end the introduction with a remark. It seems likely that for most symmetric pairs $(G,H)$, there will exist discrete series $\pi$ such that $\pi|_H$ has only finite multiplicities as well as discrete series $\pi'$ such that $\pi'|_H$ has some infinite multiplicities. For example, if $G$ has holomorphic discrete series, then the holomorphic discrete series of $G$ always restrict with finite multiplicities to any symmetric subgroup (\cite{JV}, \cite{Ko3}, \cite{OlOr}). On the other hand, in many cases one can check using the Mackey restriction principle that large unitary principal series of $G$ will restrict with infinite multiplicities to $H$, and one expects large discrete series to behave the same way. It would be nice to have a criterion to determine when each of these cases occurs that is analogous to the known criteria for discrete decomposability in \cite{Ko3}, \cite{DV}. Certainly, the authors of this paper are very far from understanding this problem.

\section{A Framework for Studying Multiplicities}

Suppose $G$ is a reductive Lie group of Harish-Chandra class and $H\subset G$ is a closed reductive subgroup of Harish-Chandra class. Let $\pi$ be a discrete series representation of $G$, and let $\sigma$ be a tempered representation of $H$. Let $\chi_{\pi}$ be the infinitesimal character of $\pi$, and let $\psi_{\sigma}$ be the infinitesimal character of $\sigma$. Let $K\subset G$ be a maximal compact subgroup such that $H\cap K\subset H$ is a maximal compact subgroup of $H$. Let $\theta$ be the Cartan involution of $\mathfrak{g}=\operatorname{Lie}(G)$ with fixed points $\mathfrak{k}=\operatorname{Lie}(K)$, let $\mathfrak{s}$ be the $-1$ eigenspace of $\theta$, let $\mathfrak{h}=\operatorname{Lie}(H)$ be the Lie algebra of $H$, and let $\mathfrak{q}$ be a $\theta$ stable and $H\cap K$ stable complement to $\mathfrak{h}$ in $\mathfrak{g}$. In particular, we have the decomposition
$$\mathfrak{g}=\mathfrak{k}\cap \mathfrak{h}\oplus \mathfrak{k}\cap \mathfrak{q}\oplus \mathfrak{s}\cap \mathfrak{h}\oplus \mathfrak{s}\cap \mathfrak{q}.$$

Let $(\xi,V_{\xi})$ be the lowest $K$ type of $\pi$,  let $\mathcal{V}_{\xi}$ be the corresponding $G$ equivariant, Hermitian vector bundle on $K\backslash G$, and let 
$$L^2(K\backslash G,\mathcal{V}_{\xi})_{\chi_{\pi}}$$ 
be the $\chi_{\pi}$-eigenspace of the action of $\mathcal{ZU}(\mathfrak{g})$ on space of square integrable sections of $\mathcal{V}_{\xi}$ over $K\backslash G$. Note that the Casimir operator $\Omega\in \mathcal{ZU}(\mathfrak{g})$ acts as an elliptic operator on $K\backslash G$; hence, this is a space of analytic functions by the elliptic regularity theorem. The group $G$ acts on $L^2(K\backslash G,\mathcal{V}_{\xi})_{\chi_{\pi}}$ by right translation, and this representation of $G$ is isomorphic to $\pi$ (\cite{Ho}, \cite{W1}). Let
$$\mathcal{S}(K\backslash G,\mathcal{V}_{\xi})_{\chi_{\pi}}$$ be the dense subspace of those sections that pullback to Harish-Chandra Schwartz functions on $G$ with values in $V_{\xi}$. The Harish-Chandra Schwartz space was first introduced on page 19 of \cite{HC5}.

Whenever $D\in \mathcal{U}(\mathfrak{g})$, left translation by $D$ will be denoted by $L_D$, and right translation by $D$ will be denoted by $R_D$. For instance, if $X\in \mathfrak{g}$, then
$$(L_Xf)(g)=\frac{d}{dt}\Big|_{t=0} f(\exp(-tX)g),\ (R_Xf)(g)=\frac{d}{dt}\Big|_{t=0} f(g\exp(tX)).$$

Recall that if $G$ is a Lie group, then there is a natural bijective linear map 
$$\exp_*: S(\mathfrak{g})\rightarrow \mathcal{U}(\mathfrak{g}).$$
If we regard $S(\mathfrak{g})$ as the space of translation invariant differential operators on $\mathfrak{g}$, and we regard $\mathcal{U}(\mathfrak{g})$ as the space of left invariant differential operators on $G$, then this map is given by 
$$R_{\exp_*(u)}(f)(g):=(u\exp^*(l_{g^{-1}}f))(0).$$
If we instead regard $\mathcal{U}(\mathfrak{g})$ as the space of right invariant differential operators on $G$, then the map can be written
$$L_{\exp_*(u)}(f)(g):=(u\exp^*(\iota^*r_gf))(0).$$
Of course, $l_gf(x)=f(g^{-1}x)$ (respectively $(r_gf)(x)=r(xg)$) denotes left translation (respectively right translation) by $g$, $(\exp^*f)(X)=f(\exp(X))$ denotes the pullback of $f$ by $\exp$, and $(\iota^*f)(x)=f(x^{-1})$ denotes the pullback of $f$ by the inversion map $\iota(x)=x^{-1}$. For more details on this map, see pages 281-283 of \cite{He}.

If $V$ is a real vector space, let $S_m(V)$ denote the elements of the complex symmetric algebra over $V$ with degree at most $m$. The following Proposition is a consequence of Theorem 1.3 in \cite{OV2}, and it is a slight modification of Theorem 3 in \cite{OV1}.

\begin{proposition} [\O rsted-Vargas] There exists a family of continuous, $H$ equivariant maps
$$r_m:\ L^2(K\backslash G,\mathcal{V}_{\xi})_{\chi_{\pi}}\longrightarrow  L^2((H\cap K)\backslash H,\operatorname{Hom}_{\mathbb{C}}(S_m(\mathfrak{s}\cap \mathfrak{q}),\mathcal{V}_{\xi}|_{(H\cap K)\backslash H})).$$
The map $r_m$ is defined by
$$r_m(f)(h)(u)=(L_{\exp_*(u)}f)(h).$$
Moreover, we have 
$$\ker(r_m)\supset \ker(r_{m+1})$$
for all $m$ and 
$$\bigcap_{m=0}^{\infty} \ker(r_m)=\{0\}.$$
\end{proposition}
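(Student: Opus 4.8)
The plan is to construct the maps $r_m$ directly by the stated formula and then verify the three assertions: continuity, the nesting of kernels, and the triviality of the intersection. For the definition, note that a square-integrable section $f \in L^2(K\backslash G, \mathcal{V}_\xi)_{\chi_\pi}$ pulls back to a $V_\xi$-valued function on $G$ that, by elliptic regularity for the Casimir, is real-analytic; one applies the left-invariant differential operator $L_{\exp_*(u)}$ for $u \in S_m(\mathfrak{s}\cap\mathfrak{q})$ and restricts to $H$. Since $\exp_*$ carries $S_m(\mathfrak{s}\cap\mathfrak{q})$ to a finite-dimensional subspace of $\mathcal{U}(\mathfrak{g})$, the assignment $u \mapsto (L_{\exp_*(u)}f)(h)$ is linear, so $r_m(f)(h) \in \operatorname{Hom}_{\mathbb{C}}(S_m(\mathfrak{s}\cap\mathfrak{q}), (\mathcal{V}_\xi)_{Kh})$. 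The $H$-equivariance is essentially formal: left-invariant differential operators commute with right translation, so $r_m(\pi(h_0)f) = \rho(h_0) r_m(f)$ where $\rho$ is the natural right-regular action of $H$ on sections of the homomorphism bundle; one just has to check that $\exp_*$ of $S_m(\mathfrak{s}\cap\mathfrak{q})$ is stable (up to the $H\cap K$ action needed to make the target bundle well-defined) under $\operatorname{Ad}(H\cap K)$, which is built into the choice of $\theta$- and $H\cap K$-stable complement $\mathfrak{q}$.

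The substantive input is Theorem 1.3 of \cite{OV2} (equivalently the refinement of Theorem 3 of \cite{OV1}): the key analytic fact is that a discrete series matrix coefficient, and more generally a function in $\mathcal{S}(K\backslash G,\mathcal{V}_\xi)_{\chi_\pi}$, together with all its derivatives, decays rapidly transverse to $H$, so that restriction of $L_{\exp_*(u)}f$ to $(H\cap K)\backslash H$ lands in $L^2$ and the restriction map is continuous from the Harish-Chandra Schwartz space. I would quote this to get $r_m$ defined and continuous on the dense subspace $\mathcal{S}(K\backslash G,\mathcal{V}_\xi)_{\chi_\pi}$, and then extend by continuity to all of $L^2(K\backslash G,\mathcal{V}_\xi)_{\chi_\pi}$; the boundedness estimate from \cite{OV2} is exactly what makes this extension legitimate. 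This is the step I expect to be the main obstacle in the sense that all the real work is hidden in citing and correctly invoking the \O rsted--Vargas estimates; the rest is bookkeeping.

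For the inclusion $\ker(r_m) \supset \ker(r_{m+1})$: observe $S_m(\mathfrak{s}\cap\mathfrak{q}) \subset S_{m+1}(\mathfrak{s}\cap\mathfrak{q})$, and $r_{m+1}(f)$ restricted to the subspace $S_m(\mathfrak{s}\cap\mathfrak{q})$ is precisely $r_m(f)$ (the defining formula $r_m(f)(h)(u)=(L_{\exp_*(u)}f)(h)$ does not depend on $m$, only on the degree of $u$). Hence $r_{m+1}(f)=0$ forces $r_m(f)=0$. Finally, for $\bigcap_m \ker(r_m) = \{0\}$: if $f$ lies in every kernel, then $(L_{\exp_*(u)}f)(h) = 0$ for all $h \in H$ and all $u \in S(\mathfrak{s}\cap\mathfrak{q})$. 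Using the decomposition $\mathfrak{g} = (\mathfrak{k}\cap\mathfrak{h}) \oplus (\mathfrak{k}\cap\mathfrak{q}) \oplus (\mathfrak{s}\cap\mathfrak{h}) \oplus (\mathfrak{s}\cap\mathfrak{q})$ and the fact that $f$ is a section of a $K$-bundle (so the $\mathfrak{k}$-directions and the $\mathfrak{s}\cap\mathfrak{h} \subset \mathfrak{h}$ directions are already determined along $H$), the vanishing of all $(\mathfrak{s}\cap\mathfrak{q})$-derivatives of the analytic function $f$ at every point of $H$, combined with the $PBW$-ordered Taylor expansion of the real-analytic $f$ around each point of $H$ (and the fact that $KH$ meets a neighborhood of any point via $\exp(\mathfrak{s}\cap\mathfrak{q})$ transversally), forces $f$ to vanish on an open set, hence identically by analyticity and connectedness, hence $f = 0$ in $L^2$. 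I would write this last step carefully, since it is where analyticity of elements of $L^2(K\backslash G,\mathcal{V}_\xi)_{\chi_\pi}$ — guaranteed by the ellipticity of $\Omega$ noted in the text — is genuinely used.
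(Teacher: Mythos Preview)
Your proposal is correct and aligns with the paper's treatment: the paper does not prove this proposition at all but simply attributes it to \O rsted--Vargas, remarking that it is a consequence of Theorem~1.3 in \cite{OV2} and a slight modification of Theorem~3 in \cite{OV1}. Your outline supplies exactly the details one would fill in---citing \cite{OV2} for the boundedness/continuity and then checking equivariance, kernel nesting, and injectivity from analyticity---so you have in fact written more than the paper does. One small terminological slip: you call $L_{\exp_*(u)}$ a \emph{left}-invariant operator, but by the paper's convention $(L_Xf)(g)=\frac{d}{dt}\big|_{t=0}f(\exp(-tX)g)$ it is \emph{right}-invariant; your equivariance conclusion is nonetheless correct precisely because right-invariant operators commute with right translation.
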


Observe that our notation differs slightly from the notation in \cite{OV1}. Let
$$r_m=U_m(r_m^*r_m)^{1/2}$$ 
denote the polar decomposition of the operator $r_m$. Then $\{U_m\}$ is a family of $H$ equivariant partial isometries with $\ker r_m=\ker U_m$ and $\operatorname{Image}(U_m)=\overline{\operatorname{Image}(r_m)}$ for every $m$. In particular, $\{\ker(U_m)^{\perp}\}$ is a filtration of $\pi|_H$ via closed, $H$ invariant subspaces. 
We require the following complementary Proposition, which is a close relative of Lemma 1 of \cite{V1}.

\begin{proposition} The map $r_m$ restricts to an $H$ equivariant map between the dense subspaces of Harish-Chandra Schwartz functions
$$r_m: \mathcal{S}(K\backslash G,\mathcal{V}_{\xi})_{\chi_{\pi}}\longrightarrow \mathcal{S}((H\cap K)\backslash H,\operatorname{Hom}_{\mathbb{C}}(S_m(\mathfrak{s}\cap \mathfrak{q}),\mathcal{V}_{\xi}|_{(H\cap K)\backslash H})).$$
\end{proposition}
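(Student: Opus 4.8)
The plan is to show that if $f \in \mathcal{S}(K\backslash G, \mathcal{V}_\xi)_{\chi_\pi}$ pulls back to a Harish-Chandra Schwartz function on $G$, then $r_m(f)$ pulls back to a Harish-Chandra Schwartz function on $H$ with values in $\operatorname{Hom}_{\mathbb{C}}(S_m(\mathfrak{s}\cap\mathfrak{q}), V_\xi)$. Recall the defining seminorms of the Harish-Chandra Schwartz space: a smooth function $\varphi$ on $G$ is Schwartz if for every $D_1, D_2 \in \mathcal{U}(\mathfrak{g})$ and every $N > 0$, the quantity $\sup_{g\in G} \Xi_G(g)^{-1}(1+\sigma(g))^N |(L_{D_1} R_{D_2}\varphi)(g)|$ is finite, where $\Xi_G$ is Harish-Chandra's spherical function and $\sigma$ is the norm function. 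First I would fix a basis $\{u_1,\dots,u_k\}$ of $S_m(\mathfrak{s}\cap\mathfrak{q})$ and observe that, by the formula $r_m(f)(h)(u_i) = (L_{\exp_*(u_i)} f)(h)$, proving the claim reduces to showing that each function $h \mapsto (L_{\exp_*(u_i)} f)(h)$, viewed as a $V_\xi$-valued function on $H$ by pulling back along $H \hookrightarrow G$ and then $G \to K\backslash G$, lies in $\mathcal{S}(H, V_\xi)$.

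The second step is to control derivatives. For $D_1, D_2 \in \mathcal{U}(\mathfrak{h})$, we have $L_{D_1} R_{D_2} (L_{\exp_*(u_i)} f)|_H = (L_{D_1 \exp_*(u_i)} R_{D_2} f)|_H$, since left- and right-invariant operators on $G$ commute and left translation by elements of $\mathcal{U}(\mathfrak{h}) \subset \mathcal{U}(\mathfrak{g})$ agrees with the intrinsic left translation on $H$. Because $f$ is Schwartz on $G$, $L_{D_1\exp_*(u_i)} R_{D_2} f$ is bounded by a constant times $\Xi_G(g)(1+\sigma_G(g))^{-N}$ on all of $G$, for every $N$. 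Restricting to $g = h \in H$, it remains to compare $\Xi_G|_H$ with $\Xi_H$ and $\sigma_G|_H$ with $\sigma_H$. The norm comparison $\sigma_H(h) \leq \sigma_G(h) \leq C(1+\sigma_H(h))$ is standard for reductive subgroups of Harish-Chandra class (up to the usual constants, and in fact $\sigma_G|_H$ and $\sigma_H$ are equivalent since $H$ is closed and reductive). The key estimate is the pointwise bound $\Xi_G(h) \leq C\, \Xi_H(h)(1 + \sigma_H(h))^{d}$ for some constants $C, d$, valid for all $h \in H$; this is exactly the kind of estimate that appears in the work on restriction of tempered representations and is precisely what Lemma 1 of \cite{V1} establishes in the relevant setting. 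Granting this, we absorb the polynomial factor $(1+\sigma_H(h))^d$ into the rapidly decreasing factor coming from the large-$N$ Schwartz bound on $G$, and conclude that every Schwartz seminorm of $r_m(f)$ over $H$ is finite.

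The main obstacle is precisely the comparison of the spherical functions $\Xi_G|_H$ and $\Xi_H$. This is not merely formal: it requires understanding how Harish-Chandra's $\Xi$-function decays along a closed reductive subgroup, which in turn involves integral-geometric estimates (e.g., comparing the $K$- and $(K\cap H)$-integrals defining the respective spherical functions, or using the characterization of $\Xi$ via asymptotics on $A^+$ and the relation between the restricted root systems of $G$ and $H$). In the symmetric-subgroup case and in rank-one situations this is handled in \cite{OV1} and \cite{V1}; the general closed-reductive case is where one must invoke the appropriate version of these estimates. Once the $\Xi$-comparison is in hand, the remaining verification that $r_m$ is $H$-equivariant on the Schwartz level is immediate from the formula defining $r_m$ (it is the restriction of the $H$-equivariant map of Proposition 2.1), and the fact that the image actually lands in the Schwartz space of $(H\cap K)\backslash H$ rather than just $H$ follows because $f$, being a section of $\mathcal{V}_\xi$, transforms under $K \supset H\cap K$ by $\xi$, so $r_m(f)$ transforms under $H\cap K$ by $\xi|_{H\cap K}$ and descends to the homogeneous bundle.
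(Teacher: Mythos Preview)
Your proposal is correct and follows essentially the same route as the paper: reduce to showing that restriction of Schwartz functions from $G$ to $H$ lands in $\mathcal{S}(H)$, and identify the comparison of $\Xi_G|_H$ with $\Xi_H$ as the crux. The only difference is that where you invoke \cite{V1} for the bound $\Xi_G(h)\le C\,\Xi_H(h)(1+\sigma_H(h))^d$, the paper instead rewrites the Schwartz seminorms via the $KA^+K$ decomposition and the exponential bounds for $\Xi$, and then proves the needed inequality $\rho_{\mathfrak{a}^+}(w\log a)\ge \rho_{\mathfrak{a}_H^+}(\log a)$ directly as a short self-contained lemma.
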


To prove the proposition, we will need a Lemma. Since $H\cap K\subset H$ is maximally compact, we have compatible Cartan decompositions $\mathfrak{g}=\mathfrak{k}\oplus \mathfrak{s}$ and $\mathfrak{h}=(\mathfrak{h}\cap \mathfrak{k})\oplus (\mathfrak{h}\cap \mathfrak{s})$. Let $\mathfrak{a}_H\subset \mathfrak{h}\cap \mathfrak{s}$ be a maximal abelian subspace, and let $\mathfrak{a}\subset \mathfrak{s}$ be a maximal abelian subspace satisfying $\mathfrak{a}\cap (\mathfrak{h}\cap \mathfrak{s})=\mathfrak{a}_H$. Let $A=\exp(\mathfrak{a})$ and $A_H=\exp(\mathfrak{a}_H)$. Let $\Delta(\mathfrak{g},\mathfrak{a})$ denote the roots of $\mathfrak{g}$ with respect to $\mathfrak{a}$, and choose positive roots $\Delta^+(\mathfrak{g},\mathfrak{a})\subset \Delta(\mathfrak{g},\mathfrak{a})$. Let $\mathfrak{a}^+$ be the positive Weyl chamber for this choice, let $A^+=\exp(\mathfrak{a}^+)$, and let 
$$\rho_{\mathfrak{a}^+}=\frac{1}{2}\sum_{\alpha\in \Delta^+(\mathfrak{g},\mathfrak{a})}\alpha.$$
Let $q:\mathfrak{a}^*\rightarrow \mathfrak{a}_H^*$ be the pullback of the inclusion, and observe 
$$q(\Delta(\mathfrak{g},\mathfrak{a}))\supset \Delta(\mathfrak{h},\mathfrak{a}_H).$$ 
Note that we can choose positive roots $\Delta^+(\mathfrak{h},\mathfrak{a})\subset\Delta(\mathfrak{h},\mathfrak{a})$ so that each positive root $\alpha$ of $\mathfrak{h}$ with respect to $\mathfrak{a}_H$ is of the form $q(\beta)$ for a positive root $\beta\in \Delta^+(\mathfrak{g},\mathfrak{a})$. Fix such a choice and let $\mathfrak{a}_H^+$ be the positive Weyl chamber for these positive roots. Let $A_H^+=\exp(\mathfrak{a}_H^+)$, and let 
$$\rho_{\mathfrak{a}_H^+}=\frac{1}{2}\sum_{\alpha\in \Delta^+(\mathfrak{h},\mathfrak{a}_H)} \alpha.$$

Let $W_G$ be the Weyl group of the root system $\Delta(\mathfrak{g},\mathfrak{a})$, and let $W_H$ be the Weyl group of the root system $\Delta(\mathfrak{h},\mathfrak{a}_H)$. 

\begin{lemma} If $a\in A_H^+$ and $w\in W_G$ such that $wa\in \overline{A^+}$, then 
$$\rho_{\mathfrak{a}^+}(w\log(a))-\rho_{\mathfrak{a}_H^+}(\log(a))\geq 0.$$
\end{lemma}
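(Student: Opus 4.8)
The plan is to reduce the Lemma to an inequality between the half-sums of positive roots of two positive systems of $\Delta(\mathfrak{g},\mathfrak{a})$ that both have $\log(a)$ as a dominant element, and then to settle that inequality by a combinatorial matching argument based on the stated containment $q(\Delta(\mathfrak{g},\mathfrak{a}))\supset\Delta(\mathfrak{h},\mathfrak{a}_H)$. Write $X=\log(a)$; since $a$ lies in the open chamber $A_H^+$ we have $\gamma(X)>0$ for every $\gamma\in\Delta^+(\mathfrak{h},\mathfrak{a}_H)$, and the hypothesis $wa\in\overline{A^+}$ means $\alpha(wX)\ge0$ for every $\alpha\in\Delta^+(\mathfrak{g},\mathfrak{a})$.

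For a positive system $\Sigma\subset\Delta(\mathfrak{g},\mathfrak{a})$ write $\rho_\Sigma=\tfrac12\sum_{\beta\in\Sigma}\beta$. First I would transport the positive system by $w$: using that $W_G$ acts on $\mathfrak{a}^*$ dually to its action on $\mathfrak{a}$,
$$\rho_{\mathfrak{a}^+}(wX)=(w^{-1}\rho_{\mathfrak{a}^+})(X)=\rho_{\Sigma_1}(X),\qquad \Sigma_1:=w^{-1}\Delta^+(\mathfrak{g},\mathfrak{a}).$$
The condition $\alpha(wX)\ge0$ for all $\alpha\in\Delta^+(\mathfrak{g},\mathfrak{a})$ says precisely that $\beta(X)\ge0$ for every $\beta\in\Sigma_1$, i.e.\ that $X$ is dominant for the positive system $\Sigma_1$. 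So the Lemma becomes the assertion
$$\sum_{\beta\in\Sigma_1}\beta(X)\ \ge\ \sum_{\gamma\in\Delta^+(\mathfrak{h},\mathfrak{a}_H)}\gamma(X),$$
where $X$ is dominant for $\Sigma_1$ and each term on the right is positive.

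Second, I would build an injection $\iota\colon\Delta^+(\mathfrak{h},\mathfrak{a}_H)\hookrightarrow\Sigma_1$ with $q(\iota(\gamma))=\gamma$. By $q(\Delta(\mathfrak{g},\mathfrak{a}))\supset\Delta(\mathfrak{h},\mathfrak{a}_H)$, every $\gamma$ has a preimage in $\Delta(\mathfrak{g},\mathfrak{a})$ under the restriction map $q$; choose one and call it $\iota(\gamma)$. Since $q$ is restriction to $\mathfrak{a}_H\ni X$ we get $\iota(\gamma)(X)=\gamma(X)>0$, which forces $\iota(\gamma)\in\Sigma_1$ (in a positive system for which $X$ is dominant, a root taking a positive value on $X$ must belong to the system, since otherwise its negative would belong to the system and take a negative value on $X$, contradicting dominance); and $\iota$ is injective because $q\circ\iota=\mathrm{id}$. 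Splitting the left-hand sum over $\iota\bigl(\Delta^+(\mathfrak{h},\mathfrak{a}_H)\bigr)$ and its complement in $\Sigma_1$, the first part contributes $\sum_\gamma\iota(\gamma)(X)=\sum_\gamma\gamma(X)$, and every $\beta$ in the complement contributes $\beta(X)\ge0$ by dominance of $X$ for $\Sigma_1$. This yields the displayed inequality, hence $\rho_{\mathfrak{a}^+}(wX)=\rho_{\Sigma_1}(X)\ge\rho_{\mathfrak{a}_H^+}(X)$, which is the Lemma.

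The one point requiring care — and the reason the statement is not immediate — is that $X=\log(a)$ is dominant for $\mathfrak{h}$ but need not be dominant for $\mathfrak{g}$, so one cannot directly compare $\rho_{\mathfrak{a}^+}(wX)$ with $\rho_{\mathfrak{a}^+}(X)$ via dominance of $\rho_{\mathfrak{a}^+}$; replacing $\Delta^+(\mathfrak{g},\mathfrak{a})$ by $w^{-1}\Delta^+(\mathfrak{g},\mathfrak{a})$ converts the hypothesis on $wX$ into genuine $\mathfrak{g}$-dominance of $X$, which is exactly what the matching argument needs. Incidentally, the compatible choice of $\Delta^+(\mathfrak{h},\mathfrak{a}_H)$ fixed just before the Lemma plays no role in this argument; any positive system of $\Delta(\mathfrak{h},\mathfrak{a}_H)$ would do.
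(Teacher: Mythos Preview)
Your proof is correct and follows essentially the same approach as the paper: transport the positive system by $w^{-1}$ so that $X=\log(a)$ becomes dominant for $\Sigma_1=w^{-1}\Delta^+(\mathfrak{g},\mathfrak{a})$, match each $\gamma\in\Delta^+(\mathfrak{h},\mathfrak{a}_H)$ with a preimage in $\Sigma_1$ via $q$, and observe that the leftover terms are nonnegative. The paper states this more tersely as $\Delta^+(\mathfrak{h},\mathfrak{a}_H)\subset q(w^{-1}\Delta^+(\mathfrak{g},\mathfrak{a}))$ and then writes the difference as a single sum over the complement; your explicit construction of the injection $\iota$ makes the cancellation step cleaner, particularly since $q$ need not be injective on $\Sigma_1$.
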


\begin{proof} Note $\Delta^+(\mathfrak{h},\mathfrak{a}_H)\subset q(w^{-1}\Delta^+(\mathfrak{g},\mathfrak{a}))$. Writing things out, we obtain
$$\rho_{\mathfrak{a}^+}(w\log(a))-\rho_{\mathfrak{a}_H^+}(\log(a))$$
$$=\sum_{\alpha\in \Delta^+(\mathfrak{g},\mathfrak{h})}(w^{-1}\alpha)(\log(a))-\sum_{\alpha\in \Delta^+(\mathfrak{h},\mathfrak{a}_H)}\alpha(\log(a))$$
$$=\sum_{\alpha\in w^{-1}\Delta^+(\mathfrak{g},\mathfrak{a})-\Delta^+(\mathfrak{h},\mathfrak{a}_H)}\alpha(\log(a)).$$
The final sum is nonnegative since $wa\in \overline{A^+}$ implies $\alpha(\log(a))\geq 0$ for all $\alpha\in w^{-1}\Delta^+(\mathfrak{g},\mathfrak{a})$. 
\end{proof}

Next, we prove the proposition.

\begin{proof} To prove this Proposition, it is enough to check that the restriction to $H$ of a Harish-Chandra Schwartz function on $G$ is a Harish-Chandra Schwartz function on $H$. Let $\Xi_G$ be the Harish-Chandra spherical function on $G$, and let $\Xi_H$ be the Harish-Chandra spherical function on $H$ (see for instance page 186 of \cite{Kn} for a definition; in his notation $\Xi_G=\varphi_0^G$). Recall that $f\in C^{\infty}(G)$ lies in the Harish-Chandra Schwartz space, $\mathcal{S}(G)$, of $G$ iff for every $D_1, D_2\in \mathcal{U}(\mathfrak{g})$ and every $m\in \mathbb{N}$, there exists a constant $C_{D_1,D_2,m}>0$ such that
$$\left|(1+|g|)^m(R_{D_1}L_{D_2}f)(g)\right|\leq C_{D_1,D_2,m}\left| \Xi_G(g)\right|\ \text{for\ all}\ g\in G.$$
To define $|\cdot|$, we choose a $W_G$ invariant norm $|\cdot|_{\mathfrak{a}}$ on $\mathfrak{a}$, and we define 
$$|g|=|kak|=|\log(a)|_{\mathfrak{a}}.$$
Of course, the definition of the Harish-Chandra Schwartz space is independent of the choice of norm on $\mathfrak{a}$. Using well-known upper and lower bounds on $\Xi_G$ (see Theorem 3 on page 279 of \cite{HC2} and (8.83) on page 259 of \cite{Kn}), we observe that $f$ is in the Harish-Chandra Schwartz space of $G$ iff for every $D_1,D_2\in \mathcal{U}(\mathfrak{g})$ and $m\in \mathbb{N}$, there exists a constant $C_{D_1,D_2,m}$ such that
$$\left|(R_{D_1}L_{D_2}f)(k_1ak_2)\right|\leq C_{D_1,D_2,m}(1+|\log(a)|_{\mathfrak{a}})^{-m}e^{-\rho_{\mathfrak{a}^+}(\log(a))}$$
for all $k_1,k_2\in K$ and $a\in \overline{A^+}$.

Now, fix $a\in A_H^+$, and choose $w\in W_G$ such that $wa\in \overline{A^+}$. Choose $k'\in K$ that acts on $A$ by $w$, and define $|\cdot|_{\mathfrak{a}_H}$ to be the restriction of $|\cdot|_{\mathfrak{a}}$ to $\mathfrak{a}_{H}$. If $D_1,D_2\in \mathcal{U}(\mathfrak{h})\subset \mathcal{U}(\mathfrak{g})$, then we obtain 
$$\left|(R_{D_1}L_{D_2}f)(k_1ak_2)\right|=\left|(R_{D_1}L_{D_2}f)(k_1k'(k'^{-1}ak')k'^{-1}k_2)\right|$$
$$\leq C_{D_1,D_2,m}(1+|\log(wa)|_{\mathfrak{a}})^{-m}e^{-\rho_{\mathfrak{a}^+}(\log(wa))}$$
$$=C_{D_1,D_2,m}(1+|\log(a)|_{\mathfrak{a}_H})^{-m}e^{-\rho_{\mathfrak{a}^+}(\log(wa))}$$
for $k_1,k_2\in K\cap H$. Applying the Lemma, we obtain
$$\left|(R_{D_1}L_{D_2}f)(k_1ak_2)\right|\leq C_{D_1,D_2,m}(1+|\log(a)|_{\mathfrak{a}_H})^{-m}e^{-\rho_{\mathfrak{a}_H^+}(\log(a))}$$
whenever $D_1,D_2\in \mathcal{ZU}(\mathfrak{h})$, $m\in \mathbb{N}$, $a\in A_H^+$, $k_1,k_2\in K\cap H$, and $f\in \mathcal{S}(G)$. This implies that the restriction of $f$ to $H$ is in $\mathcal{S}(H)$.
\end{proof}

As a corollary of the proof, we observe that the restriction of the map $r_m$ to Harish-Chandra Schwartz spaces is continuous with respect to the locally convex topologies of the Harish-Chandra Schwartz spaces.

Next, we wish to understand the part of the image of the map $r_m$ that transforms by an irreducible tempered representation $\sigma$ of $H$. This will help us understand the multiplicity of the representation $\sigma$ in $\pi|_H$. We need to introduce an additional map.

\bigskip

Suppose $(\eta,V)$ is a finite dimensional, unitary representation of $H\cap K$, and let $\mathcal{V}\rightarrow (H\cap K)\backslash H$ be the corresponding $H$ equivariant, Hermitian vector bundle. Let $L^2((H\cap K)\backslash H,\mathcal{V})$ denote the space of $L^2$ sections of $\mathcal{V}$ over $(H\cap K)\backslash H$ with respect to the $H$ invariant measure on $(H\cap K)\backslash H$, and let $\mathcal{S}((H\cap K)\backslash H,\mathcal{V})\subset L^2((H\cap K)\backslash H,\mathcal{V})$ be the dense subspace of functions that pullback to $V$ valued Harish-Chandra Schwartz functions on $H$.

Let $\Theta_{\sigma}\in C^{-\infty}(H)$ be the character of $\sigma$ on $H$. Recall that in \cite{HC1}, \cite{HC3} it is proved that $\Theta_{\sigma}$ is given by integration against an analytic function on the dense subset of regular, semisimple elements $H'\subset H$ that is locally $L^1$ on all of $H$. We will write $\Theta_{\sigma}$ for the distribution as well as the analytic function on $H'$.

Define $C^{\omega}((H\cap K)\backslash H,\mathcal{V})_{\psi_{\sigma}}$ to be the space of analytic sections of\\
$\mathcal{V}\rightarrow (H\cap K)\backslash H$ 
satisfying
$$R_Df=\psi_{\sigma}(D)f\ \text{for\ all}\ D\in \mathcal{ZU}(\mathfrak{h}).$$

Further, if $\mu\in \widehat{H\cap K}$ is an $H\cap K$ type, let
$$C^{\omega}((H\cap K)\backslash H,\mathcal{V})_{\psi_{\sigma}}(\mu)\subset C^{\omega}((H\cap K)\backslash H,\mathcal{V})_{\psi_{\sigma}}$$
be the subspace that transforms by $\mu$ under right translation by $H\cap K$. 

We define a map
$$*\Theta_{\sigma}: \mathcal{S}((H\cap K)\backslash H,\mathcal{V})\rightarrow C^{\omega}((H\cap K)\backslash H,\mathcal{V})_{\psi_{\sigma}}$$
by $$f\mapsto f*\Theta_{\sigma}\ \text{where}\ (f*\Theta_{\sigma})(h)=\int_H f(hh_1^{-1})\Theta_{\sigma}(h_1)dh_1.$$
The integral converges because integration against $\Theta_{\sigma}$ defines a tempered distribution (see the remarks on page 45 of \cite{HC5} together with Lemma 27 of \cite{HC4} or page 456 of \cite{Kn}).
Note that we may also write
$$(f*\Theta_{\sigma})(h)=\int_H f(h_1)\Theta_{\sigma}(hh_1^{-1})dh_1=\int_H f(h_1^{-1}h)\Theta_{\sigma}(h_1)dh_1.$$
Here we have used substitution and the fact that $\Theta_{\sigma}$ is conjugation invariant. From these formulas, it is clear that $*\Theta_{\sigma}$ is an $H\times H$ equivariant map. In particular, 
$$(f*\Theta_{\sigma})(kh)=\eta(k)^{-1}(f*\Theta_{\sigma})(h)$$
for every $k\in H\cap K$. Moreover, if $\widetilde{D}\in \mathcal{U}(\mathfrak{h})^H$, write $\widetilde{D}=\exp_*(D)$ with $D\in S(\mathfrak{h})^H$, and observe
$$R_{\exp_*D}(f*\Theta_{\sigma})(h)=D_X\int_H f(h\exp(X)h_1^{-1})\Theta_{\sigma}(h_1)dh_1$$
$$=D_X\int_H f(hh_1^{-1}\exp(\operatorname{Ad}_{h_1}X))\Theta_{\sigma}(h_1)dh_1$$
$$=(\operatorname{Ad}_{h_1}D)_X\int_H f(hh_1^{-1}\exp(X))\Theta_{\sigma}(h_1)dh_1$$
$$=D_X\int_H f(hh_1^{-1})\Theta_{\sigma}(h_1\exp(X))dh_1$$
$$=\langle R_{\exp_*D}\Theta_{\sigma},\iota^*l_{h^{-1}}f\rangle=\psi_{\sigma}(\widetilde{D})(f*\Theta_{\sigma})(h).$$
Here we have used that integration against $\Theta_{\sigma}$ is an invariant eigendistribution with infinitesimal character $\psi_{\sigma}$. Finally, the function $f*\Theta_{\sigma}$ is analytic because it is an eigenfunction for the Casimir operator of $H$, which is elliptic on the Riemannian symmetric space $(H\cap K)\backslash H$.

Let $\sigma\in \widehat{H}_{\text{temp.}}$, and let $\mu\in \widehat{H\cap K}$ be a minimal $K$ type for $\sigma$. Let $\phi_{\pi,\sigma,m}$ be the composition of the map $r_m$ and the map $*\Theta_{\sigma}$ from 
$$\mathcal{S}((H\cap K)\backslash H,\operatorname{Hom}_{\mathbb{C}}(S_m(\mathfrak{s}\cap \mathfrak{q}),\mathcal{V}_{\xi}|_{(H\cap K)\backslash H}))$$
to 
$$C^{\omega}((H\cap K)\backslash H,S_m(\mathfrak{s}\cap \mathfrak{q})^*\otimes \mathcal{V}_{\xi})_{\psi_{\sigma}}.$$

Let $m(\sigma,\pi|_H)$ denote the multiplicity of $\sigma$ in the direct integral decomposition of $\pi|_H$.

\begin{lemma} We have the equality 
$$m(\sigma,\pi|_H)=\lim_{m\rightarrow \infty}\dim \phi_{\pi,\sigma,m}(\mathcal{S}(K\backslash G,\mathcal{V}_{\xi})_{\chi_{\pi}}(\mu))$$
$$=\lim_{m\rightarrow \infty}\dim \phi_{\pi,\sigma,m}(\mathcal{S}(K\backslash G,\mathcal{V}_{\xi})_{\chi_{\pi}})(\mu).$$
\end{lemma}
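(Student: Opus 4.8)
The plan is to identify the multiplicity $m(\sigma,\pi|_H)$ with the dimension of a space of $H$-equivariant maps (or, dually, a space of $\sigma$-isotypic analytic vectors) and then to realize that space as an increasing union of the images $\phi_{\pi,\sigma,m}(\cdots)(\mu)$ coming from the \O rsted--Vargas filtration. First I would recall that since $\pi$ is a discrete series representation, it is realized on $L^2(K\backslash G,\mathcal{V}_\xi)_{\chi_\pi}$, and for a tempered $\sigma$ of $H$ with minimal $H\cap K$-type $\mu$, the multiplicity $m(\sigma,\pi|_H)$ in the direct integral decomposition equals the dimension of the space of $H\cap K$-homomorphisms from $V_\mu$ into the ``$\sigma$-component'' of the space of analytic vectors of $\pi|_H$ satisfying the infinitesimal character condition $R_D = \psi_\sigma(D)$ for $D\in\mathcal{ZU}(\mathfrak h)$; this is exactly the kind of statement that underlies the Frobenius-reciprocity/Plancherel description of tempered multiplicities (and is used implicitly in \cite{OV1}, \cite{V1}, \cite{V2}). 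Concretely, one should fix once and for all the identification of $m(\sigma,\pi|_H)$ with $\dim\bigl(C^\omega\text{-}\sigma\text{-part of }\pi|_H\bigr)(\mu)$, where the analytic model of the $\sigma$-isotypic part is obtained precisely by convolving Harish-Chandra Schwartz vectors in $\pi|_H$ with $\Theta_\sigma$.

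Next I would put the filtration to work. By the \O rsted--Vargas Proposition, the maps $r_m$ have $\ker r_m\supset\ker r_{m+1}$ and $\bigcap_m\ker r_m=\{0\}$, and by the second Proposition each $r_m$ preserves Harish-Chandra Schwartz vectors; composing with $*\Theta_\sigma$ (which by the computation just before the Lemma lands in $C^\omega((H\cap K)\backslash H,\cdot)_{\psi_\sigma}$ and is $H$-equivariant) gives the maps $\phi_{\pi,\sigma,m}$. The key point is that $\phi_{\pi,\sigma,m}$ computes the ``level-$m$ approximation'' of the $\sigma$-isotypic component: the image $\phi_{\pi,\sigma,m}(\mathcal S(K\backslash G,\mathcal V_\xi)_{\chi_\pi})(\mu)$ is the space of $\mu$-vectors one sees after convolving the $m$-jet along $\mathfrak s\cap\mathfrak q$ of Schwartz vectors with $\Theta_\sigma$. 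Since the kernels $\ker r_m$ decrease to $0$ and the polar decompositions $r_m=U_m(r_m^*r_m)^{1/2}$ exhibit $\{\ker(U_m)^\perp\}$ as an exhausting filtration of $\pi|_H$ by closed $H$-invariant subspaces, the dimensions $\dim\phi_{\pi,\sigma,m}(\cdots)(\mu)$ are nondecreasing in $m$ and their limit is the dimension of the $\sigma$-isotypic, $\mu$-typical part of the whole $\pi|_H$, i.e. $m(\sigma,\pi|_H)$; this is where one uses that $\sigma$ is irreducible tempered so that convolution with $\Theta_\sigma$ detects multiplicity exactly (no cancellation between distinct tempered $\sigma$'s, and $\mu$ minimal pins down one line per copy of $\sigma$). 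The equality of the two limits in the statement — first applying $(\mu)$ then $\phi$, versus first applying $\phi$ then $(\mu)$ — follows because $r_m$ and $*\Theta_\sigma$ are $H$-equivariant, hence in particular $H\cap K$-equivariant, so the $\mu$-isotypic projection commutes with $\phi_{\pi,\sigma,m}$.

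The main obstacle I anticipate is the first step: pinning down precisely why $m(\sigma,\pi|_H)$, the multiplicity in a \emph{direct integral} decomposition, is literally the dimension of this space of analytic $\mu$-vectors with the right infinitesimal character, and why passing to the limit over $m$ recovers the full multiplicity rather than something smaller. One has to be careful that $\pi|_H$ may have continuous spectrum, so ``multiplicity'' must be interpreted as an a.e.-defined measurable multiplicity function, and the argument must show that the finite-level images $\phi_{\pi,\sigma,m}(\cdots)(\mu)$ stabilize to exactly that value for every regular tempered $\sigma$ (not merely a.e.). The way to handle this is to combine the exhaustion $\bigcap_m\ker r_m=0$ with the fact that $*\Theta_\sigma$ is, up to normalization, the orthogonal projection onto the $\sigma$-isotypic subspace in the Plancherel decomposition of $\pi|_H$ restricted to Schwartz vectors, so that $\dim$ of the image at level $m$ is monotone and its supremum is the rank of that projection, which is $m(\sigma,\pi|_H)$. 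Everything else — convergence of the convolution integral, analyticity of the output, $H\cap K$-equivariance, the Schwartz-space mapping property — has already been established in the excerpt and only needs to be cited.
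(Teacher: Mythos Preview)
Your proposal is correct and follows essentially the same route as the paper: exhaust $\pi|_H$ by the filtration $\{\ker(U_m)^\perp\}$, embed each level into $L^2(H)\otimes V$ so that the Plancherel measure is used consistently across $m$, and then read off $m(\sigma,\ker(U_m)^\perp)$ as $\dim\phi_{\pi,\sigma,m}(\cdots)(\mu)$ via convolution with $\Theta_\sigma$. The paper makes two points more explicit than you do: first, that using the \emph{same} Plancherel measure at every level is what justifies $m(\sigma,\pi|_H)=\lim_m m(\sigma,\ker(U_m)^\perp)$, and second, that the minimal $H\cap K$-type $\mu$ occurs in $\sigma$ with multiplicity exactly one by Vogan \cite{Vo}, which is the precise reason your ``one line per copy of $\sigma$'' works.
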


Before beginning the proof of the Lemma, we recall the Plancherel formula for $H$ (\cite{Se}, \cite{M}, \cite{HC7} for the original papers, \cite{HC6}, \cite{F}, \cite{W2} for expository pieces). First, we have an abstract direct integral decomposition
$$L^2(H)\cong \int_{\widehat{H}_{\text{temp.}}} V_{\sigma}^*\otimes V_{\sigma}.$$
Recall that we may view $V_{\sigma}^*\otimes V_{\sigma}\cong \operatorname{End}(V_{\sigma})_{\text{HS}}$ as the space of Hilbert Schmidt operators on $V_{\sigma}$. Let the isomorphism between $L^2(H)$ and the direct integral be denoted $f\mapsto \widehat{f}$. Then for $f\in (L^1\cap L^2)(H)$, we have $\widehat{f}(\sigma)=\sigma(f)\in \operatorname{End}(V_{\sigma})_{\text{HS}}$. In fact, when $f$ is in the Harish-Chandra Schwartz space, we can make things even more explicit. We have a unique $H\times H$ equivariant, continuous map
$$M_{\sigma}:(V_{\sigma}^*\otimes V_{\sigma})^{\infty}\rightarrow C^{\infty}(H)$$
satisfying $$M_{\sigma}(w\otimes v)(h)=\langle \sigma(h)v,w\rangle\ \text{for}\ v\in V_{\sigma}^{\infty}, w\in (V_{\sigma}^*)^{\infty}.$$
Let $C^{\infty}(H)_{\sigma^*\otimes \sigma}$ denote the image of this map. If $\Theta_{\sigma}$ is the Harish-Chandra character of $\sigma$, then we have an $H\times H$ equivariant map $\mathcal{S}(H)\rightarrow C^{\infty}(H)_{\sigma^*\otimes \sigma}$ given by 
$$f\mapsto f*\Theta_{\sigma}.$$
Moreover, for $f\in \mathcal{S}(H)$, we have $$M_{\sigma}(\sigma(f))=f*\Theta_{\sigma}.$$

Now, let $V$ be a finite dimensional complex vector space, and let $W\subset L^2(H)\otimes V$ be a closed, right $H$ invariant subspace such that $\mathcal{S}(W):=W\cap (\mathcal{S}(H)\otimes V)\subset W$ is dense. Then for each $\sigma\in \widehat{H}_{\text{temp.}}$, we must have $W_{\sigma}\subset V_{\sigma}^*\otimes V$ such that $$W\cong \int_{\widehat{H}_{\text{temp.}}} W_{\sigma}\otimes V_{\sigma}.$$ 
The point is that we can use our explicit Fourier transform to describe $W_{\sigma}$. Extend $M_{\sigma}$ to a map on $\mathcal{S}(H)\otimes V$ (trivially on $V$), and note that $$\{f*\Theta_{\sigma}|\ f\in \mathcal{S}(W)\}\subset M_{\sigma}(V_{\sigma}\otimes W_{\sigma})^{\infty}$$
is a dense subspace. Since the maps $*\Theta_{\sigma}$ and $M_{\sigma}$ are $H\times H$ equivariant, we may look at the part of both sides that transforms on the right by the minimal $K$ type $\mu$ of $\sigma$. Then we obtain
$$\{f*\Theta_{\sigma}|\ f\in \mathcal{S}(W)(\mu)\}\subset M_{\sigma}(V_{\sigma}(\mu)\otimes W_{\sigma})^{\infty}$$
is dense. Since $\mu$ is a minimal $H\cap K$ type of $\sigma$, it follows from \cite{Vo} that $\dim V_{\sigma}(\mu)=1$. In particular, 
$$\dim W_{\sigma}=\dim \{f*\Theta_{\sigma}|\ f\in \mathcal{S}(W)(\mu)\}.$$
Finally, note that $\dim W_{\sigma}=m(\sigma, W)$, the multiplicity of $\sigma$ in $W$.

\begin{proof} Now, to compute the multiplicity of $\sigma$ in $\pi|_H$, we first realize $\pi$ as\\ $L^2(K\backslash G,\mathcal{V}_{\xi})_{\chi_{\pi}}$. Recall $\{\ker(U_m)^{\perp}\}$ is a filtration of $\pi|_H$ by $H$ invariant, closed subspaces. Since each $\{\ker(U_m)^{\perp}\}$ can be identified with a closed subpace of $$L^2((H\cap K)\backslash H,\operatorname{Hom}_{\mathbb{C}}(S_m(\mathfrak{s}\cap \mathfrak{q}),\mathcal{V}_{\xi}|_{(H\cap K)\backslash H}))\subset L^2(H)\otimes \operatorname{Hom}_{\mathbb{C}}(S_m(\mathfrak{s}\cap \mathfrak{q}),V_{\xi}),$$ we may take the measure on $\widehat{H}$ in the direct integral decomposition of $\ker(U_m)^{\perp}$ to be the Plancherel measure on $\widehat{H}$ for every $m$. In particular, this implies 
$$m(\sigma,\pi|_H)=\lim_{m\rightarrow \infty} m(\sigma, \ker(U_m)^{\perp}).$$

We may identify $\ker(U_m)^{\perp}$ as an $H$ representation with 
$$\overline{r_m(L^2(K\backslash G, V_{\xi})_{\chi_{\pi}})}\subset L^2((H\cap K)\backslash H, S_m(\mathfrak{s}\cap \mathfrak{q})^*\otimes V_{\xi}).$$
Since Harish-Chandra Schwartz functions on $G$ are dense in $L^2(K\backslash G, V_{\xi})_{\chi_{\pi}}$ and the restriction of Harish-Chandra Schwartz functions to $H$ are Harish-Chandra Schwartz functions on $H$ by Proposition 2.2, we may use the above discussion of the Plancherel formula to compute the multiplicity of $\sigma$ in $\ker(U_m)^{\perp}$. Explicitly, we have 
$$m(\sigma, \ker(U_m)^{\perp})=\dim \phi_{\pi,\sigma, m}(\mathcal{S}(K\backslash G, \mathcal{V}_{\xi})_{\chi_{\pi}}(\mu)).$$
The lemma follows.
\end{proof}

\section{A Constant Multiplicity Theorem}

Given a cuspidal parabolic subgroup $P=MAN\subset H$, a discrete series representation $\delta\in \widehat{M}$ and a unitary character $\nu\in \widehat{A}$, we may form the (possibly infinite dimensional) vector bundle on $G/P$ corresponding to the tensor product of $\delta\otimes \nu\otimes 1$ with the square root of the density bundle on $G/P$. The space of $L^2$ sections of this vector bundle is a tempered representation of $H$, which we will call $\sigma(\delta,\nu)$; if $\nu$ is regular it is irreducible \cite{HC7}. Moreover, every irreducible tempered representation $\sigma\in \widehat{H}$ with regular infinitesimal character is of this form with $\nu\in \widehat{A}$ regular \cite{T}.

Let $(\widehat{A})'$ denote the set of regular characters of $A$, and consider $\{\sigma(\delta,\nu)|\ \nu\in (\widehat{A})'\}$ for fixed $\delta\in \widehat{M}$. This set of irreducible tempered representations is an open subset of $\widehat{H}_{\text{temp.}}$ and the union of all such open subsets has full measure in $\widehat{H}_{\text{temp.}}$ \cite{Fe}, \cite{HC6}, \cite{HC7}, \cite{Kn}. In particular, we may study the multiplicity function $m(\sigma(\delta,\nu),\pi)$ as a function of $\nu\in (\widehat{A})'$ for fixed $\pi\in \widehat{G}_{\text{disc.}}$ and $\delta\in \widehat{M}_{\text{disc.}}$. 

\begin{theorem} Let $\pi\in \widehat{G}_{\text{disc.}}$ be a discrete series representation of $G$, a reductive Lie group of Harish-Chandra class, let $H\subset G$ be a closed reductive subgroup of $G$ of Harish-Chandra class, and let $P=MAN\subset H$ be a cuspidal parabolic subgroup of $H$. Let $\delta\in \widehat{M}_{\text{disc.}}$ be a discrete series representation of $M$, and let $\nu\in (\widehat{A})'$ be a regular, unitary character of $A$. Let $\sigma(\delta,\nu)$ be the irreducible tempered representation of $H$ induced from the representation $\delta\otimes \nu\otimes 1$ of $P=MAN$. Let  
$$m(\sigma(\delta,\nu),\pi|_H)$$
be the multiplicity function that records the multiplicity of $\sigma(\delta,\nu)$ in the direct integral decomposition of $\pi|_H$. Then, as a function of $\nu\in (\widehat{A})'$, the multiplicity function $m(\sigma(\delta,\nu),\pi|_H)$ is constant.
\end{theorem}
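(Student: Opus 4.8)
### Proof proposal

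The plan is to use the machinery of Section 2 to reduce the statement to a statement about the dimension of the image of the composite maps $\phi_{\pi,\sigma(\delta,\nu),m}$ and then to show that, as $\nu$ varies over $(\widehat{A})'$ with $\delta$ fixed, these dimensions stabilize to a common value. By Lemma 2.4, for each fixed $\nu$ we have
$$m(\sigma(\delta,\nu),\pi|_H)=\lim_{m\to\infty}\dim \phi_{\pi,\sigma(\delta,\nu),m}(\mathcal{S}(K\backslash G,\mathcal{V}_{\xi})_{\chi_{\pi}})(\mu_\nu),$$
where $\mu_\nu$ is a minimal $H\cap K$ type of $\sigma(\delta,\nu)$. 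The first key observation is that the minimal $H\cap K$ type can be chosen \emph{independently of} $\nu$: for $\sigma(\delta,\nu)=\operatorname{Ind}_{MAN}^H(\delta\otimes\nu\otimes 1)$ the $H\cap K$ structure is governed by Frobenius reciprocity from $M\cap K$ and is insensitive to the continuous parameter $\nu$, so we may fix one $\mu=\mu_\delta$ serving as a minimal $H\cap K$ type for all $\nu\in(\widehat A)'$. Likewise, the infinitesimal character $\psi_{\sigma(\delta,\nu)}$ depends real-analytically (indeed polynomially, via the Harish-Chandra isomorphism) on $\nu$, and its restriction to $(\widehat A)'$ stays regular.

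The heart of the argument is then to show that $\dim\phi_{\pi,\sigma(\delta,\nu),m}(\mathcal{S}(K\backslash G,\mathcal{V}_\xi)_{\chi_\pi})(\mu)$ is independent of $\nu$ for each fixed $m$ (for $\nu$ ranging over the connected set $(\widehat A)'$, or over each of its connected components, if necessary — one then separately checks the components agree, e.g. by a Weyl-group symmetry argument using $\sigma(\delta,\nu)\cong\sigma(\delta,w\nu)$). I would do this by an upper-semicontinuity plus lower-semicontinuity sandwich. The map $\phi_{\pi,\sigma(\delta,\nu),m}$ is $r_m$ followed by convolution $*\Theta_{\sigma(\delta,\nu)}$, and by the remark following Proposition 2.2 the first map is a fixed continuous map of Fréchet spaces, independent of $\nu$. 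So everything is reduced to understanding how convolution with the Harish-Chandra character $\Theta_{\sigma(\delta,\nu)}$, restricted to the $\mu$-isotypic Harish-Chandra Schwartz sections, varies with $\nu$. Using the explicit Fourier-theoretic description recalled before the proof of Lemma 2.4 — namely that $f\mapsto f*\Theta_{\sigma(\delta,\nu)}$ on the $\mu$-isotypic part computes $\sigma(\delta,\nu)(f)$ composed with the one-dimensional projection to $V_{\sigma(\delta,\nu)}(\mu)$ — the rank of $\phi_{\pi,\sigma(\delta,\nu),m}$ equals the dimension of the image of $f\mapsto\sigma(\delta,\nu)(f)|_{V_{\sigma(\delta,\nu)}(\mu)}$ on the fixed space $r_m(\mathcal{S}(\cdots)_{\chi_\pi}(\mu))$. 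Since $\sigma(\delta,\nu)$ is a family of representations realized on a \emph{fixed} Hilbert space $L^2$-sections-over-$G/P$ with $\nu$ entering only through the unitary character twist, the operators $\sigma(\delta,\nu)(f)$ depend continuously (weakly, and in Hilbert–Schmidt norm after applying the Schwartz decay) on $\nu$; hence the rank is lower semicontinuous in $\nu$. For the reverse inequality I would invoke the stabilization built into Proposition 2.1 ($\ker r_m\supset\ker r_{m+1}$, intersection zero) together with the finiteness of $\dim W_\sigma=m(\sigma,\ker(U_m)^\perp)$: the sequence of ranks is monotone in $m$ and bounded by the (a priori possibly infinite, but in the relevant stabilized regime finite) limiting value, and a dimension-count/semicontinuity argument forces the limit to be locally constant, hence constant on the connected set $(\widehat A)'$.

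The main obstacle I anticipate is making the semicontinuity of $\nu\mapsto\operatorname{rank}\phi_{\pi,\sigma(\delta,\nu),m}$ rigorous in \emph{both} directions simultaneously: rank is generically only lower semicontinuous for continuously varying families of operators, and degenerate jumps downward are exactly what one must rule out. The way around this is to exploit that we are not looking at an arbitrary continuous family but at a family whose matrix coefficients (equivalently, the kernels $\Theta_{\sigma(\delta,\nu)}$ as locally $L^1$ functions, analytic on $H'$) are \emph{real-analytic} in $\nu$ — this is where the regularity of $\nu$ and Harish-Chandra's description of tempered characters enter. For a real-analytic family of finite-rank-on-each-$H\cap K$-type operators between fixed Fréchet spaces, the rank is constant off a proper analytic subset, and one then needs a separate argument — most naturally the Orsted–Vargas filtration estimates plus the constancy of Plancherel density within the series $\{\sigma(\delta,\nu)\}$, or alternatively Vargas's Proposition 2 in \cite{V2} that the multiplicity vanishes only on a measure-zero set — to upgrade ``constant off an analytic subset'' to ``constant everywhere on $(\widehat A)'$.'' A secondary technical point is ensuring the interchange of the $\nu$-limit with the $m\to\infty$ limit in Lemma 2.4, which should follow from monotonicity of $m\mapsto\operatorname{rank}\phi_{\pi,\sigma(\delta,\nu),m}$ together with a uniform (in $\nu$, locally) bound on the limit coming from the Plancherel realization of $\ker(U_m)^\perp$.
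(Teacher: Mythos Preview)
Your core insight---that the Harish-Chandra characters $\Theta_{\sigma(\delta,\nu)}$ depend real-analytically on $\nu$, and that the minimal $H\cap K$ type $\mu$ is independent of $\nu$---is exactly what drives the paper's proof. But you have wrapped this insight in a good deal of unnecessary machinery (two-sided semicontinuity of rank, interchange of the $m\to\infty$ limit with variation in $\nu$, upgrading from almost-everywhere to pointwise), and each of the obstacles you flag is in fact bypassed entirely by the paper's argument.

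The paper's route is shorter and avoids every one of your anticipated difficulties. First, let $N$ be the essential supremum of $\nu\mapsto m(\sigma(\delta,\nu),\pi|_H)$ (treat the bounded and unbounded cases separately). Pick $\nu_0$ with $m(\sigma(\delta,\nu_0),\pi|_H)=N$ and, using Lemma 2.4, pick a \emph{single} $m_0$ and functions $f_1,\ldots,f_N\in \mathcal{S}(K\backslash G,\mathcal{V}_\xi)_{\chi_\pi}(\mu)$ whose images under $\phi_{\pi,\sigma(\delta,\nu_0),m_0}$ are linearly independent. An elementary linear-algebra lemma then produces points $(h_j,w_j)$ so that the $N\times N$ determinant
\[
\nu\longmapsto \det\bigl((r_{m_0}(f_i)*\Theta_{\sigma(\delta,\nu)})(h_j,w_j)\bigr)
\]
is nonzero at $\nu_0$. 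This is a \emph{scalar} real-analytic function of $\nu\in\widehat A$, hence nonvanishing off a set of measure zero. Therefore the images $\{r_{m_0}(f_i)*\Theta_{\sigma(\delta,\nu)}\}$ are linearly independent for almost every $\nu$, giving $m(\sigma(\delta,\nu),\pi|_H)\geq N$ a.e.; maximality of $N$ forces equality a.e. In the unbounded case the same argument shows the multiplicity is $\geq N$ a.e.\ for every $N$, hence infinite a.e.

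Note in particular: there is no need for upper semicontinuity (maximality of $N$ replaces it), no $m\to\infty$ limit to interchange (a single $m_0$ suffices), and no need to upgrade from ``constant off a null set'' to ``constant everywhere''---the multiplicity function in a direct integral is only defined up to null sets in the first place, so the paper simply declares it to be the constant $N$. Your invocation of Vargas's Proposition~2 as a separate input is also unnecessary: the argument above already subsumes and sharpens that result.
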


In \cite{V2}, Vargas considers $m(\sigma(\delta,\nu),\pi|_H)$ as a measurable function of $\nu$ for fixed $\delta$. In Proposition 2, he shows that whenever $m(\sigma(\delta,\nu),\pi|_H)$ is not identically zero almost everywhere as a function of $\nu$, it is zero on a set of measure zero. In particular, this Theorem is a generalization of Proposition 2 of \cite{V2}.

\begin{proof} By Lemma 2.4, we have the formula
$$m(\sigma(\delta,\nu),\pi|_H)=\lim_{m\rightarrow \infty} \dim \phi_{\pi,\sigma,m}(\mathcal{S}(K\backslash G,\mathcal{V}_{\xi})_{\chi_{\pi}}(\mu)).$$
Fix $\delta$ and $\pi$, and first assume that $m(\sigma(\delta,\nu),\pi|_H)$ is bounded almost everywhere for $\nu\in (\widehat{A})'$ with maximum $N$. Choose $\nu_0\in (\widehat{A})'$ such that $N=m(\sigma(\delta,\nu_0),\pi|_H)$, and choose $f_1,\ldots,f_N\in \mathcal{S}(K\backslash G,\mathcal{V}_{\xi})_{\chi_{\pi}}(\mu)$ such that 
$$\left\{\phi_{\pi,\sigma(\delta,\nu_0),m}(f_j)=r_m(f_j)*\Theta_{\sigma(\delta,\nu_0)}\right\}$$
is linearly independent for sufficiently large $m$. Let $m_0$ be a natural number that is sufficiently large. Recall $$r_m(f_j)*\Theta_{\sigma(\delta,\nu_0)}\in C^{\omega}((H\cap K)\backslash H,S_m(\mathfrak{s}\cap \mathfrak{q})^*\otimes \mathcal{V}_{\xi})_{\psi_{\sigma}}(\mu).$$
In particular, one may think of each $r_m(f_j)*\Theta_{\sigma(\delta,\nu_0)}$ as a complex valued function on $H\times S_m(\mathfrak{s}\cap \mathfrak{q})\otimes V_{\xi}^*$. We require the following Lemma.

\begin{lemma} Let $X$ be a set, and let $\{F_1,\ldots,F_N\}$ be a linearly independent set of complex valued functions on $X$. Then there exists a subset $\{x_1,\ldots,x_N\}\subset X$ such that 
$$\det(F_i(x_j))\neq 0.$$
\end{lemma}

The proof of this Lemma is an easy linear algebra exercise that is left to the reader. In our case, we may find $(h_j,w_j)\in H\times S_{m_0}(\mathfrak{s}\cap \mathfrak{q})\otimes V_{\xi}^*$ for $j=1,\ldots,N$ such that 
$$\det((r_{m_0}(f_i)*\Theta_{\sigma(\delta,\nu_0)})(h_j,w_j))\neq 0.$$
Now, observe that 
$$\nu\mapsto \det((r_{m_0}(f_i)*\Theta_{\sigma(\delta,\nu)})(h_j,w_j))$$
is an analytic function on $\widehat{A}$ since $\Theta_{\sigma(\delta,\nu)}$ is analytic as a function of $\nu$ (Observe that $\Theta_{\sigma(\delta,\nu)}$ is well-defined and analytic function for all $\nu\in \widehat{A}$ even if $\sigma(\delta,\nu)$ isn't irreducible for singular $\nu$). In particular, since this function is non-zero somewhere, it is non-zero almost everywhere. Therefore,
$$\{r_{m_0}(f_j)*\Theta_{\sigma(\delta,\nu)}\}$$
is a linearly independent set for almost all $\nu\in (\widehat{A})'$. Since $\mu$ is a lowest $K$ type of $\sigma(\delta,\nu_0)$, it is also a lowest $K$ type for $\sigma(\delta,\nu)$ for every $\nu\in (\widehat{A})'$ (an easy calculation shows that all of these representations have the same $K$ types). In particular, Lemma 2.4 implies that 
$$m(\sigma(\delta,\nu),\pi|_H)\geq N$$
for almost every $\nu\in (\widehat{A})'$. Since $N$ was chosen to be the maximum, $m(\sigma(\delta,\nu))$ is the constant function $N$ almost everywhere on the subset $\{\sigma(\delta,\nu)|\ \nu\in (\widehat{A})'\}$. Because $m$ is a measurable function that is only well-defined up to subsets of measure zero anyway, we might as well say $m$ is the constant function $N$ on $\{\sigma(\delta,\nu)|\ \nu\in (\widehat{A})'\}$.

Next, suppose $\{m(\sigma(\delta,\nu),\pi|_H)|\ \nu\in (\widehat{A})'\}$ is unbounded. Thus, for any $N$, we may find $\nu_0$ such that $m(\sigma(\delta,\nu_0),\pi|_H)\geq N$, and the above argument then implies $m(\sigma(\delta,\nu),\pi|_H)\geq N$ for almost every $\nu\in (\widehat{A})'$. Since this is true for every $N$, we observe $m(\sigma(\delta,\nu),\pi|_H)=\infty$ for almost every $\nu\in (\widehat{A})'$. Again, since $m$ is a measurable function, we might as well call it the constant function $\infty$ on all of $\{\sigma(\delta,\nu)|\ \nu\in (\widehat{A})'\}$.
\end{proof}

\section{A Space of Spherical Functions}

Now, we specialize to the case where $H\subset G^{\tau}\subset G$ is an open subgroup of a symmetric subgroup of $G$. Let $\chi_{G}$ be a character of the center of the universal enveloping algebra $\mathcal{ZU}(\mathfrak{g})$, and let $\psi_{H}$ be a character of $\mathcal{ZU}(\mathfrak{h})$. Let $(\xi,V_{\xi})$ be an irreducible representation of $K$, and let $\mu\in \widehat{K\cap H}$. Define $$C^{\omega}(K\backslash G,\mathcal{V}_{\xi})_{\chi_{G},\psi_H}(\mu)$$ to be the vector space of analytic sections of the bundle $\mathcal{V}_{\xi}\rightarrow K\backslash G$ satisfying

\begin{enumerate}
\item
$$R_Df=\chi_{G}(D)f\ \text{for\ all}\ D\in \mathcal{ZU}(\mathfrak{g}).$$
\item
$$R_Df=\psi_{H}(D)f\ \text{for\ all}\ D\in \mathcal{ZU}(\mathfrak{h}).$$
\item 
If we let $K\cap H$ act on the space of functions on $G$, $$\operatorname{Span}_{\mathbb{C}}\left\{f(gk')|\ k'\in K\cap H\right\}$$ by right translation, then the corresponding representation is isomorphic to a Hilbert space sum of copies of $\mu$.
\end{enumerate}

Recall that we have the decomposition
$$\mathfrak{g}=\mathfrak{k}\cap \mathfrak{h}\oplus \mathfrak{k}\cap \mathfrak{q}\oplus \mathfrak{s}\cap \mathfrak{h}\oplus \mathfrak{s}\cap \mathfrak{q}$$
Here $\mathfrak{s}$ is the $-1$ eigenspace of the Cartan involution of $\mathfrak{g}$ with respect to $\mathfrak{k}$, and $\mathfrak{q}$ is the $-1$ eigenspace of the action of $\tau$ on $\mathfrak{g}$ (we write $\tau$ for both the involution on $G$ and its differential on $\mathfrak{g}$).
The following Theorem is a slight generalization of Lemma 1 on page 623 of \cite{OV1}.

\begin{theorem} [\O rsted-Vargas] Suppose $H\subset G^{\tau} \subset G$ is an open subgroup of a symmetric subgroup of a reductive Lie group of Harish-Chandra class $G$, let $\chi_G$ be a character of $\mathcal{ZU}(\mathfrak{g})$, and let $\psi_H$ be a character of $\mathcal{ZU}(\mathfrak{h})$. Further, suppose $H\cong K_0\times H_1$ where $K_0$ is compact and $H_1$ is noncompact. Let $\mathfrak{b}\subset \mathfrak{h}_1\cap \mathfrak{s}$ be a maximal abelian subspace, and let $\mathfrak{a}\subset \mathfrak{s}\cap \mathfrak{q}$ be a maximal abelian subspace. Suppose 
$$Z_{H_1\cap K}(\mathfrak{b})Z_{H_1\cap K}(\mathfrak{a})=H_1\cap K.$$
Then 
$$C^{\omega}(K\backslash G,\mathcal{V}_{\xi})_{\chi_{G},\psi_H}(\mu)$$
is finite dimensional for every $\mu\in \widehat{K\cap H}$.

\end{theorem}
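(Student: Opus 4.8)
The plan is to reduce the computation of $\dim C^{\omega}(K\backslash G,\mathcal V_\xi)_{\chi_G,\psi_H}(\mu)$ to counting solutions of a system of invariant differential equations on a low-dimensional submanifold, in the spirit of Lemma 1 of \cite{OV1}.

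\textbf{Step 1: a generalized Cartan decomposition.} Put $A_{\mathfrak q}=\exp\mathfrak a$ and $A_{\mathfrak b}=\exp\mathfrak b$. Since $H$ is open in a symmetric subgroup and $\mathfrak a\subset\mathfrak s\cap\mathfrak q$ is maximal abelian, the Cartan decomposition of the reductive symmetric space $G/H$ gives $G=KA_{\mathfrak q}H$; since $H\cong K_0\times H_1$ with $\mathfrak k_0\subset\mathfrak k$ and $\mathfrak b\subset\mathfrak h_1\cap\mathfrak s=\mathfrak h\cap\mathfrak s$ is maximal abelian, the Cartan decomposition of $H$ gives $H=(H\cap K)A_{\mathfrak b}(H\cap K)$, so $G=KA_{\mathfrak q}(H\cap K)A_{\mathfrak b}(H\cap K)$. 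Now invoke the hypothesis: writing a middle element $k\in H\cap K$ as $k=k_0k_1$ with $k_0\in K_0$ and $k_1\in H_1\cap K$, and then (after inverting the given identity) $k_1=z_{\mathfrak a}z_{\mathfrak b}$ with $z_{\mathfrak a}\in Z_{H_1\cap K}(\mathfrak a)$, $z_{\mathfrak b}\in Z_{H_1\cap K}(\mathfrak b)$, one checks using $[\mathfrak k_0,\mathfrak h_1]=0$ that
$$a_{\mathfrak q}\,k\,a_{\mathfrak b}=z_{\mathfrak a}\,(a_{\mathfrak q}a_{\mathfrak b})\,(z_{\mathfrak b}k_0),\qquad z_{\mathfrak a}\in K,\quad z_{\mathfrak b}k_0\in H\cap K .$$
Hence $G=K\cdot S\cdot(H\cap K)$, where $S:=A_{\mathfrak q}A_{\mathfrak b}=\{\exp X\exp Y:X\in\mathfrak a,\ Y\in\mathfrak b\}$ is a submanifold of dimension at most $\dim\mathfrak a+\dim\mathfrak b$.

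\textbf{Step 2: restriction to $S$ and radial parts.} Because $f\in C^{\omega}(K\backslash G,\mathcal V_\xi)_{\chi_G,\psi_H}(\mu)$ is left $K$-equivariant of type $\xi$ and its right $(H\cap K)$-span is $\mu$-isotypic, Step 1 shows that $f$ is determined by finitely much real-analytic data along $S$; precisely, restriction embeds $C^{\omega}(K\backslash G,\mathcal V_\xi)_{\chi_G,\psi_H}(\mu)$ into the space of real-analytic sections, over the regular set of $S$, of a fixed finite-dimensional vector bundle $\mathcal W$ built from $\xi$, $\mu$ and the isotropy data. The classical computation of radial parts of invariant differential operators turns the eigenequations (1) and (2) into a linear system on $S$ with regular-singular coefficients,
$$\operatorname{rad}(D)\phi=\chi_G(D)\phi\ \ (D\in\mathcal{ZU}(\mathfrak g)),\qquad \operatorname{rad}(D')\phi=\psi_H(D')\phi\ \ (D'\in\mathcal{ZU}(\mathfrak h)),$$
for sections $\phi$ of $\mathcal W$, supplemented along the walls of $S$ by the finitely many compatibility conditions coming from the wall isotropy groups.

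\textbf{Step 3: finiteness.} It remains to bound the dimension of the analytic solution space of this system. The idea is to show the system is holonomic and then to quote finite-dimensionality of the analytic solutions of a holonomic system with regular singularities: along the $\mathfrak a$-directions the radial parts of $\mathcal{ZU}(\mathfrak g)$ already form a complete system because $\mathfrak a$ is a Cartan subspace for $G/H$ (Harish-Chandra's analysis of radial parts on a symmetric space), along the $\mathfrak b$-directions the radial parts of $\mathcal{ZU}(\mathfrak h)$ do likewise since $\mathfrak b$ is a Cartan subspace for $H$, and Step 1 guarantees that $S$ has the expected dimension $\dim\mathfrak a+\dim\mathfrak b\le\operatorname{rank}\mathfrak g+\operatorname{rank}\mathfrak h$, so these equations are ``enough'' to force the combined characteristic variety to meet a generic cotangent fibre in finitely many points. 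Alternatively, and more in the style of \cite{OV1}, one argues directly: near a generic point of $S$ the solution space is finite-dimensional by Frobenius/Cauchy--Kovalevskaya-type estimates for the system, and the admissible exponents at the walls lie in a finite set, so the global analytic solution space is finite-dimensional. Either way the Theorem follows.

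\textbf{The main obstacle} is Step 3 --- showing that the restricted system is genuinely overdetermined, i.e.\ that the radial parts of $\mathcal{ZU}(\mathfrak g)$ and $\mathcal{ZU}(\mathfrak h)$ together constrain all $\dim\mathfrak a+\dim\mathfrak b$ directions on $S$; carrying this out in the stated generality (rather than in the essentially rank-one situation of \cite{OV1}) is the heart of the matter. This is exactly where the centralizer hypothesis is indispensable: when it fails, the decomposition $G=K\cdot S\cdot(H\cap K)$ breaks down, extra parameters coming from $(H_1\cap K)/Z_{H_1\cap K}(\mathfrak a)Z_{H_1\cap K}(\mathfrak b)$ persist, the natural base space is strictly larger than $S$, and the system is no longer holonomic --- consistent with the expectation that for many symmetric pairs the continuous spectrum can occur with infinite multiplicity. (A variant of Steps 2--3 avoids radial parts: by Proposition 2.1 one has $\bigcap_m\ker r_m=\{0\}$ on $C^{\omega}(K\backslash G,\mathcal V_\xi)_{\chi_G,\psi_H}(\mu)$; each transverse jet $r_m(f)|_{(H\cap K)\backslash H}$ lies in a space of $\mathcal{ZU}(\mathfrak h)$-eigensections of prescribed $H\cap K$-types on the left and right, which is finite-dimensional by Harish-Chandra's finiteness theorem; and the $\mathcal{ZU}(\mathfrak g)$-eigenequation provides recursions --- non-degenerate in every transverse direction precisely by the centralizer hypothesis --- showing that $r_m(f)|_H$ for $m>m_0$ is determined by the lower jets, so $f\mapsto(r_m(f)|_H)_{m\le m_0}$ embeds the space into something finite-dimensional.)
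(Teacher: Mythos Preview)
Your overall architecture is sound and matches the paper's: both reduce via a double Cartan decomposition to the slice $A_{\mathfrak q}A_{\mathfrak b}$, and both recognize that the centralizer hypothesis is precisely what allows the middle $(H\cap K)$-factor to be absorbed into the outer $K$- and $(H\cap K)$-factors. Your Step~1 computation is correct.

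However, Step~3 is only a sketch, and you rightly flag it as the obstacle. Neither of your suggested routes---holonomicity plus regular singularities, or jet recursions via $r_m$---is carried to a proof. Establishing holonomicity of the combined system on $S$ in the stated generality is genuinely nontrivial: the $\mathcal{ZU}(\mathfrak g)$-radial parts constrain the $\mathfrak a$-directions and the $\mathcal{ZU}(\mathfrak h)$-radial parts the $\mathfrak b$-directions, but only \emph{after} one sorts out how the two radial-part constructions interact, and that bookkeeping is exactly what you are hoping to skip.

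The paper closes this gap with a concrete algebraic device rather than $D$-module theory: a PBW-type decomposition of the enveloping algebra due to Harish-Chandra. For regular $b\in\exp(\mathfrak b)$ and any subgroup $K_2\subset K_1:=H_1\cap K$ satisfying $K_2\cdot Z_{K_1}(\mathfrak b)=K_1$, there is a finite set $\{v_i\}\subset\mathcal U(\mathfrak b)$ with
\[
\mathcal U(\mathfrak h_1)=\bigl(\operatorname{Ad}(b^{-1})\mathcal U(\mathfrak k_2)\bigr)\Bigl(\textstyle\sum_{i=1}^r\mathcal{ZU}(\mathfrak h_1)\,v_i\Bigr)\mathcal U(\mathfrak k_1)
\]
(Corollary~1 of \cite{HC8}). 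Taking $K_2=Z_{H_1\cap K}(\mathfrak a)$, the centralizer hypothesis becomes exactly $K_2\cdot Z_{K_1}(\mathfrak b)=K_1$. This decomposition makes the evaluation map $f\mapsto\bigl((R_{v_i}R_{w_j}f)(b)\bigr)_{i,j}$ injective (the $w_j$ running over a basis of $\mathcal U(\mathfrak k_1)/\operatorname{Ann}V_1$), which immediately gives $\dim C^{\omega}(K_2\backslash H_1,\mathcal V_2)_{\psi}(\mu_1)<\infty$. The $\mathfrak a$-direction, already in \cite{OV1}, is handled by the analogous Harish-Chandra decomposition for $\mathcal U(\mathfrak g)$ at a regular $a\in\exp(\mathfrak a)$: one shows that $f$ is determined by finitely many ``middle derivatives'' $h_1\mapsto f(a;v_j;k_0h_1)$, each of which lands in the finite-dimensional space just described. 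A last step disposes of the $K_0$-variable using that $\mathcal{ZU}(\mathfrak k_0)$-eigenspaces on a compact group are finite-dimensional.

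So the missing ingredient in your argument is this Harish-Chandra enveloping-algebra decomposition. It replaces your proposed holonomicity or recursion arguments with an explicit finite spanning set for the Taylor coefficients at a regular point of $S$, and---importantly---it shows why the centralizer hypothesis is exactly what is needed: it is the hypothesis $K_2M=K_1$ under which Harish-Chandra's decomposition applies with $K_2=Z_{H_1\cap K}(\mathfrak a)$.
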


One observes that the symmetric pairs $(O(2n,1),O(k)\times O(2n-k,1))$, $(U(n,1),$\\ $U(k)\times U(n-k,1))$, and $(\operatorname{Sp}(n,1),\operatorname{Sp}(k)\times \operatorname{Sp}(n-k,1))$ are all examples of symmetric pairs that satisfy the hypothesis of the above Theorem.

Technically, this Theorem is a very slight generalization of Lemma 1 of \cite{OV1}. Thus, we point out how to prove this version. The key is a Lemma of Harish-Chandra.

\begin{lemma} [Harish-Chandra] Suppose $H_1$ is a reductive Lie group of Harish-Chandra class with maximal compact subgroup $K_1\subset H_1$, and let $\mathfrak{h}_1=\mathfrak{k}_1+\mathfrak{s}_1$ be the corresponding Cartan decomposition of $\mathfrak{h}_1$. Let $\mathfrak{b}\subset \mathfrak{s}_1$ be a maximal abelian subspace, and let $M=Z_{K_1}(\mathfrak{b})$. Suppose $K_2\subset K_1$ is a subgroup satisfying $K_2M=K_1$. Let $(\mu_1,V_1)\in \widehat{K_1}$, let $(\mu_2,V_2)$ be a finite dimensional representation of $K_2$, and let $\psi$ be a character of $\mathcal{ZU}(\mathfrak{h}_1)$. Let 
$$C^{\omega}(K_2\backslash H_1,\mathcal{V}_2)_{\psi}(\mu_1)$$
be the space of analytic sections of the vector bundle $\mathcal{V}_2\rightarrow K_2\backslash H$ that transform by $\mu_1$ on the right and by $\psi$ under $\mathcal{ZU}(\mathfrak{h})$. Then
$$C^{\omega}(K_2\backslash H_1,\mathcal{V}_2)_{\psi}(\mu_1)$$
is finite dimensional.
\end{lemma}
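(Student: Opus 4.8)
The plan is to pull the problem back to $H_1$ and then recognize the resulting space as one of Harish-Chandra's spaces of spherical functions, with the subgroup $K_2$ replaced by $K_1$ by means of the hypothesis $K_2M=K_1$. An analytic section of $\mathcal{V}_2\to K_2\backslash H_1$ in $C^\omega(K_2\backslash H_1,\mathcal{V}_2)_\psi(\mu_1)$ pulls back to an analytic map $f\colon H_1\to V_2$ satisfying $f(k_2h)=\mu_2(k_2)f(h)$ for $k_2\in K_2$, such that the span of the right $K_1$-translates of $f$ is a sum of copies of $\mu_1$, and such that $R_Df=\psi(D)f$ for every $D\in\mathcal{ZU}(\mathfrak{h}_1)$. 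It is enough to prove that the space $\mathcal{C}$ of all such $f$ is finite dimensional.

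First I would carry out the geometric reduction. Since $\mathfrak{b}\subset\mathfrak{s}_1$ is maximal abelian, the Cartan decomposition gives $H_1=K_1AK_1$ with $A=\exp(\mathfrak{b})$. Because $M=Z_{K_1}(\mathfrak{b})$ centralizes $A$ and $K_1=K_2M$, one obtains $H_1=K_2MAK_1=K_2A(MK_1)=K_2AK_1$. Hence the restriction map $f\mapsto f|_A$ is injective on $\mathcal{C}$. Moreover, combining the left $(K_2,\mu_2)$-equivariance, the right $K_1$-type condition, and the fact that $M\cap K_2$ centralizes $A$, one checks that $f|_A$ (more precisely, the associated $K_1$-equivariant coefficient map on $A$) is constrained to take values in the \emph{fixed} finite-dimensional space $\operatorname{Hom}_{M\cap K_2}(V_{\mu_1},V_2)$. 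Thus $\mathcal{C}$ embeds, $H_1$-equivariantly in a suitable sense, into the space of analytic $\operatorname{Hom}_{M\cap K_2}(V_{\mu_1},V_2)$-valued functions on $A$. This step is exactly where $K_2M=K_1$ is indispensable: for a subgroup $K_2$ that is too small (for instance $K_2=\{e\}$) one has $H_1\neq K_2AK_1$, the reduction to $A$ fails, and indeed $\mathcal{C}$ is then genuinely infinite dimensional.

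The second step is Harish-Chandra's differential-equations argument. Restricting the eigenequations $R_Df=\psi(D)f$, $D\in\mathcal{ZU}(\mathfrak{h}_1)$, to $A$ and using the theory of radial components of invariant differential operators along the Cartan decomposition, one gets for the finite-dimensional-valued function $f|_A$ a system of differential equations on $A\cong\mathbb{R}^{\dim\mathfrak{b}}$ whose coefficients are real-analytic off the walls and regular-singular, with poles controlled by the roots, at the walls. The Harish-Chandra isomorphism exhibits $\mathcal{ZU}(\mathfrak{h}_1)$ as a finite module over a polynomial subring, so this system is holonomic; therefore its space of solutions analytic in a neighborhood of the identity of $A$ is finite dimensional, with a bound of the shape (order of the little Weyl group of $\mathfrak{b}$) times $\dim\operatorname{Hom}_{M\cap K_2}(V_{\mu_1},V_2)$ coming from the indicial equation. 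Since every element of $\mathcal{C}$ is analytic at the identity and is recovered from its restriction to $A$, we conclude $\dim\mathcal{C}<\infty$. This is precisely the mechanism of Harish-Chandra's proof that spaces of $\tau$-spherical functions are finite dimensional and of Lemma 1 of \cite{OV1}; the only new input is the reduction in the previous paragraph.

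The main obstacle is that reduction: converting the one-sided $K_2$-equivariance into an \emph{effective} finite-dimensional target for $f|_A$. One has to use $K_2M=K_1$ essentially --- the double coset space $K_2\backslash H_1/K_1$ is far less rigid than $K_1\backslash H_1/K_1$ --- and to keep careful track of how the $M\cap K_2$-action and the right $\mu_1$-type condition interact over $A$. Once that bookkeeping is in place, the finiteness statement for the resulting holonomic regular-singular system is classical and may simply be quoted from Harish-Chandra's work on spherical functions (cf.\ \cite{HC2}).
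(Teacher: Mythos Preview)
Your outline is sound, but it takes a genuinely different route from the paper's proof. You reduce to $A$ via $H_1=K_2AK_1$, package $f$ as a finite-dimensional coefficient map on $A$, and then invoke the radial-component/holonomic machinery for eigenfunctions of $\mathcal{ZU}(\mathfrak{h}_1)$ to bound the solution space. The paper instead works purely algebraically at a \emph{single} regular point $b\in\exp(\mathfrak{b})$: it quotes a decomposition
\[
\mathcal{U}(\mathfrak{h}_1)=\bigl(\operatorname{Ad}(b^{-1})\mathcal{U}(\mathfrak{k}_2)\bigr)\Bigl(\sum_{i=1}^r\mathcal{ZU}(\mathfrak{h}_1)v_i\Bigr)\mathcal{U}(\mathfrak{k}_1),\qquad v_i\in\mathcal{U}(\mathfrak{b}),
\]
(which is exactly where $K_2M=K_1$ enters, since $\mathfrak{k}_2+\mathfrak{m}=\mathfrak{k}_1$ lets one trade one copy of $\mathfrak{k}_1$ for $\mathfrak{k}_2$), and then shows that the linear map $f\mapsto\bigl((R_{v_i}R_{w_j}f)(b)\bigr)_{i,j}$ into $V_2\otimes\mathbb{C}^{r+m}$ is injective, by analyticity. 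No differential equations on $A$, no regular-singular analysis.

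What each buys: the paper's argument is short and entirely algebraic, needing only the finiteness of the $v_i$ and of $\mathcal{U}(\mathfrak{k}_1)/\operatorname{Ann}(V_1)$. Your approach is more conceptual and recovers the familiar spherical-function picture, but it requires you to set up the coefficient map carefully (your parenthetical ``more precisely, the associated $K_1$-equivariant coefficient map'' is doing real work --- $f|_A$ alone is not injective) and to justify that the radial-component formalism, usually stated for two-sided $K_1$-equivariance, goes through for the $K_2\times K_1$ decomposition. That last step is where $K_2M=K_1$ would have to be used again, and it is the one place your sketch is thin; but once made precise it essentially reproduces the enveloping-algebra decomposition the paper cites.
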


This Lemma is a very slight modification and generalization of Corollary 2 on page 65 of \cite{HC8} or Lemma 8 on page 67 of \cite{HC8}. For the sake of completeness, we give a proof.

\begin{proof} Choose $b\in \exp(\mathfrak{b})$ regular. Then Corollary 1 of \cite{HC8} says that there exists a finite collection $\{v_i\}_{i=1}^r\subset \mathcal{U}(\mathfrak{b})$ such that
$$\mathcal{U}(\mathfrak{h}_1)=\left(\operatorname{Ad}(b^{-1})\mathcal{U}(\mathfrak{k}_2)\right)\left(\sum_{i=1}^r\mathcal{ZU}(\mathfrak{h}_1)v_i\right)\mathcal{U}(\mathfrak{k}_1).$$ 
Now, view $C^{\omega}(K_2\backslash H_1,\mathcal{V}_2)_{\psi}(\mu_1)$ as a space of analytic functions from $H_1$ to $V_2$. Denote the annihilator of $V_1$ in $\mathcal{U}(\mathfrak{k}_1)$ by $\operatorname{Ann}(V_1)$, and let $\{w_j\}_{j=1}^{m}$ be a basis for the finite dimensional vector space $\mathcal{U}(\mathfrak{k}_1)/\operatorname{Ann}(V_1)$. We have a map
$$C^{\omega}(K_2\backslash H_1,\mathcal{V}_2)_{\psi}(\mu_1)\rightarrow V_2\otimes \mathbb{C}^{r+m}$$
by $$f\mapsto (R_{v_i}R_{w_j}f)(b).$$
If we show that this map is injective, then the Lemma follows. Therefore, we suppose $f\in C^{\omega}(K_2\backslash H_1,\mathcal{V}_2)_{\psi}(\mu_1)$ with $(R_{v_i}R_{w_j}f)(b)=0$ for all $i,j$. Since $f$ is analytic, to conclude $f=0$, it is enough to show $(R_Df)(b)=0$ for all $D\in \mathcal{U}(\mathfrak{h}_1)$. Using the above decomposition of $\mathcal{U}(\mathfrak{h}_1)$, we write $D=(\operatorname{Ad}(b^{-1})D_2)(\sum z_iv_i)D_1$ with $D_2\in \mathcal{U}(\mathfrak{k}_2)$, $z_i\in \mathcal{ZU}(\mathfrak{h}_1)$ and $D_2\in \mathcal{U}(\mathfrak{k}_1)$. Further, write $D_1=\sum_{j=1}^m a_j w_j \operatorname{mod}(\operatorname{Ann}(V_1))$. Then
$$(R_Df)(b)=d\mu_2(D_1)\left(\sum_{i,j} \psi(z_i)a_j R_{v_i}R_{w_j}f\right)(b)=0.$$
The Lemma follows.
\end{proof}

Now, we explain how to modify the proof on pages 623-625 of \cite{OV1} to obtain Theorem 3.1 under our slightly weaker assumptions. Let $\mathfrak{a}\subset \mathfrak{s}\cap \mathfrak{q}$ be a maximal abelian subspace, and let $a\in \exp(\mathfrak{a})$ be regular. Given $f\in C^{\omega}(K\backslash G,\mathcal{V}_{\xi})_{\chi_{G},\psi_H}(\mu)$, \O rsted and Vargas define the middle differentiation of $f$ by $X\in \mathfrak{g}$ at $(a,h)$ to be
$$f(a;X;h)=\frac{d}{dt}\Big|_{t=0} f(a\exp(tX)h).$$
They extend this definition to any $X\in \mathcal{U}(\mathfrak{g})$ by iteration. Then, utilizing arguments on page 624, \O rsted and Vargas show that there is a finite set $\{v_i\}\subset \mathcal{ZU}(Z_{\mathfrak{g}}(\mathfrak{a}))$ such that $f\in C^{\omega}(K\backslash G,\mathcal{V}_{\xi})_{\chi_{G},\psi_H}(\mu)$ is determined by the set of functions $H_1\rightarrow V_1$ of the form
$$h_1\mapsto g_{f,k_0}(h_1)=f(a;v_j;k_0h_1)$$
for all $k_0\in K_0$ and $j$. This part of their argument is unchanged by our weaker set of hypotheses. Let $\psi_{H_1}$ be the restriction of $\psi_H$ to $\mathcal{ZU}(\mathfrak{h}_1)$, and let $\zeta_i$ for $i=1,\ldots,r$ be the finite collection of irreducible $H_1\cap K=K_1$ representations in $\mu|_{H_1\cap K}$. Now, it is clear that 
$$g_{f,k_0}\in \operatorname{Span}_{i=1}^r C^{\omega}(K_2\backslash H_1,\mathcal{V}_2)_{\psi_{H_1}}(\zeta_i)\subset C^{\omega}(K_2\backslash H_1,\mathcal{V}_2)_{\psi_{H_1}}.$$
In the last Lemma, we showed that this space is finite dimensional. Following \cite{OV1}, we then consider $g_{f,k_0}(h_1)$ as a function on $K_0$ with values in the finite dimensional space $\operatorname{Span}_{i=1}^r C^{\omega}(K_2\backslash H_1,\mathcal{V}_2)_{\psi_{H_1}}(\zeta_i)$. Note that $\mathcal{ZU}(\mathfrak{k}_0)$ acts by a fixed character on $k_0\mapsto g_{f,k_0}$. Moreover, the space of vector valued functions on a compact group $K_0$ in a fixed eigenspace for $\mathcal{ZU}(\mathfrak{k}_0)$ is always a finite dimensional space. The Theorem follows.

\section{A Finite Multiplicity Theorem}

We return to the assumptions of the first section. In particular, $G$ is a reductive Lie group of Harish-Chandra class, $H\subset G$ is a closed reductive subgroup of Harish-Chandra class, and $K\subset G$ is a maximal compact subgroup, chosen in such a way that $H\cap K\subset H$ is maximal compact. We wish to use the space $C^{\omega}(K\backslash G,\mathcal{V}_{\xi})_{\chi_{G},\psi_{H}}(\mu)$ introduced in the last section to study multiplicities. To do this, we need to add two additional maps to the framework developed in section one.

Define
$$*_H\Theta_{\sigma}: \mathcal{S}(K\backslash G,\mathcal{V}_{\xi})_{\chi_{\pi}}\rightarrow C^{\omega}(K\backslash G,\mathcal{V}_{\xi})_{\chi_{\pi},\psi_{\sigma}}$$
by $$f\mapsto f*_H\Theta_{\sigma}\ \text{where}\ (f*_H\Theta_{\sigma})(g)=\int_H f(gh^{-1})\Theta_{\sigma}(h)dh.$$
This definition depends on a choice of Haar measure on $H$; thus, our map is well-defined only up to a positive constant. However, this constant is inconsequential for our applications; thus, we will ignore this ambiguity.

The integral converges because $l_{g^{-1}}\iota^*f$ is in the Harish-Chandra Schwartz space of $G$ for every $g\in G$ (see Lemma 13 of \cite{HC5}); thus, by the proof of Proposition 2.2, its restriction to $H$ is in the Harish-Chandra Schwartz space of $H$. Convergence now follows from the fact that integration against $\Theta_{\sigma}$ defines a tempered distribution (see the remarks on page 45 of \cite{HC5} together with Lemma 27 of \cite{HC4} or page 456 of \cite{Kn}).

We must check that the image of $*_H\Theta_{\sigma}$ is indeed contained in $C^{\omega}(K\backslash G,\mathcal{V}_{\xi})_{\chi_{\pi},\psi_{\sigma}}$. Note that $f\mapsto f*_H\Theta_{\sigma}$ is left $G$ invariant as a map from functions on $G$ to functions on $G$; in particular, $f*_H\Theta_{\sigma}$ transforms by $\xi$ on the left and can be thought of as a section of the appropriate vector bundle. To check the $\chi_{\pi}$ condition, observe that the map $\exp_*$ defined earlier is equivariant for the adjoint action. In particular, every $\widetilde{D}\in \mathcal{ZU}(\mathfrak{g})=\mathcal{U}(\mathfrak{g})^G$ is of the form $\exp_*(D)$ for $D\in S(\mathfrak{g})^G$. Choosing such a $D$, we observe
$$R_{\exp_*(D)}(f*\Theta_{\sigma})(g)=D_X\int_H f(g\exp(X)h^{-1})\Theta_{\sigma}(h)dh$$
$$=D_X\int_H f(gh^{-1}\exp(\operatorname{Ad}_hX))\Theta_{\sigma}(h)dh$$
$$=(\operatorname{Ad}_h^{-1}D)_X\int_Hf(gh^{-1}\exp(X))\Theta_{\sigma}(h)dh$$
$$=\int_H D_Xf(gh^{-1}\exp(X))\Theta_{\sigma}(h)dh=\chi_{\pi}(\widetilde{D})(f*\Theta_{\sigma})(g).$$

To check the $\psi_{\sigma}$ condition, let $D\in S(\mathfrak{h})^H$ be homogeneous and observe
$$R_{\exp_*(D)}(f*\Theta_{\sigma})(g)=D_X\int_H f(g\exp(X)h^{-1})\Theta_{\sigma}(h)dh$$
$$=D_X\int_H f(g(h\exp(-X))^{-1})\Theta_{\sigma}(h)dh=\langle \Theta_{\sigma}, (-1)^{\deg D}R_{\exp_*(D)}l_{g^{-1}}\iota^*f\rangle$$
$$\langle R_{\exp_*(D)}\Theta_{\sigma}, l_{g^{-1}}\iota^*f\rangle=\psi_{\sigma}(\exp_*D)\langle \Theta_{\sigma},l_{g^{-1}}\iota^*f\rangle$$
$$=\psi_{\sigma}(\exp_*D)(f*\Theta_{\sigma})(g).$$
Here we have used that $\Theta_{\sigma}$ is an invariant eigendistribution with infitesimal character $\psi_{\sigma}$. 
Note that $f*_H\Theta_{\sigma}$ is analytic by the elliptic regularity theorem, since the Casimir operator of $G$ acts on $K\backslash G$ by an elliptic operator.

\bigskip

To complete our commutative diagram, we need one final map. Define
$$\widetilde{r}_m: C^{\omega}(K\backslash G,\mathcal{V}_{\xi})_{\chi_{\pi},\psi_{\sigma}}\longrightarrow C^{\omega}((H\cap K)\backslash H,S_m(\mathfrak{s}\cap \mathfrak{q})^*\otimes \mathcal{V}_{\xi})_{\psi_{\sigma}}$$
by 
$$\widetilde{r}_m(f)(h)(D)=(L_{\exp_*(D)}f)(h)$$
for $D\in S_m(\mathfrak{s}\cap \mathfrak{q})$. It is not difficult to verify that this map is right $H$ equivariant and left $H\cap K$ equivariant. Hence, its image does indeed lie in 
$$C^{\omega}((H\cap K)\backslash H,S_m(\mathfrak{s}\cap \mathfrak{q})^*\otimes \mathcal{V}_{\xi})_{\psi_{\sigma}}.$$

\bigskip

Here is the commutative diagram formed from the four maps we have introduced.

$$\begin{CD}
\mathcal{S}(K\backslash G,\mathcal{V}_{\xi})_{\chi_{\pi}}       @> r_m >>     \mathcal{S}((H\cap K)\backslash H,\operatorname{Hom}_{\mathbb{C}}(S_m(\mathfrak{s}\cap \mathfrak{q}),\mathcal{V}_{\xi}))\\
@VV *_H\Theta_{\sigma} V                                                @VV *\Theta_{\sigma} V\\
C^{\omega}(K\backslash G,\mathcal{V}_{\xi})_{\chi_{\pi},\psi_{\sigma}}           @> \widetilde{r}_m >>  C^{\omega}((H\cap K)\backslash H,S_m(\mathfrak{s}\cap \mathfrak{q})^*\otimes \mathcal{V}_{\xi})_{\psi_{\sigma}}
\end{CD}$$

Next, we check that this diagram commutes. Observe
$$\widetilde{r}_m(f*\Theta_{\sigma})(D,h)=L_{\exp_*(D)}(f*_H\Theta_{\sigma})(h)$$
$$=L_{\exp_*(D)}\int_H f(hh_1^{-1})\Theta_{\sigma}(h_1)dh_1$$
$$=D_X\int_H f(\exp(-X)hh_1^{-1})\Theta_{\sigma}(h_1)dh_1$$
$$=\int_H (D_Xf)(\exp(-X)hh_1^{-1})\Theta_{\sigma}(h_1)dh_1$$
$$=\int_H (L_{\exp_*(D)})(hh_1^{-1})\Theta_{\sigma}(h_1)dh_1$$
$$=(r_mf*\Theta_{\sigma})(h,D).$$

Now, since all four of the maps, $r_m$, $\widetilde{r}_m$, $*_H\Theta_{\sigma}$, and $*\Theta_{\sigma}$ are right $H$ equivariant, they all preserve $H\cap K$ types. Hence, we may take the piece of each of our four spaces that transforms on the right by a minimal $H\cap K$ type $\mu$ of $\sigma$, and we obtain a commutative diagram.

$$\begin{CD}
\mathcal{S}(K\backslash G,\mathcal{V}_{\xi})_{\chi_{\pi}}(\mu)       @> r_m >>     \mathcal{S}((H\cap K)\backslash H,\operatorname{Hom}_{\mathbb{C}}(S_m(\mathfrak{s}\cap \mathfrak{q}),\mathcal{V}_{\xi}))(\mu)\\
@VV *_H\Theta_{\sigma} V                                                @VV *\Theta_{\sigma} V\\
C^{\omega}(K\backslash G,\mathcal{V}_{\xi})_{\chi_{\pi},\psi_{\sigma}}(\mu)           @> \widetilde{r}_m >>   C^{\omega}((H\cap K)\backslash H,S_m(\mathfrak{s}\cap \mathfrak{q})^*\otimes \mathcal{V}_{\xi})_{\psi_{\sigma}}(\mu)
\end{CD}$$

Using this commutative diagram together with Lemma 2.4, we obtain the following.

\begin{proposition} Suppose $\pi$ is a discrete series representation of a reductive Lie group of Harish-Chandra class $G$ with lowest $K$ type $(\xi,V_{\xi})$ and infinitesimal character $\chi_{\pi}$, and suppose $\sigma$ is a tempered representation of a closed reductive subgroup $H\subset G$ of Harish-Chandra class with minimal $H\cap K$ type $\mu$ and infinitesimal character $\psi_{\sigma}$. If $m(\sigma,\pi|_H)$ denotes the multiplicity of $\sigma$ in the direct integral decomposition of $\pi|_H$, then
$$m(\sigma,\pi|_H)\leq \dim C^{\omega}(K\backslash G,\mathcal{V}_{\xi})_{\chi_{\pi},\psi_{\sigma}}(\mu).$$
In particular, if $C^{\omega}(K\backslash G,\mathcal{V}_{\xi})_{\chi_{\pi},\psi_{\sigma}}(\mu)$ is finite dimensional, then the multiplicity of $\sigma$ in $\pi|_H$ is finite.
\end{proposition}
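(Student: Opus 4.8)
The plan is to assemble the commutative diagram that has already been built and feed it into Lemma 2.4. By Lemma 2.4, the multiplicity $m(\sigma,\pi|_H)$ equals the stabilized dimension $\lim_{m\to\infty}\dim\phi_{\pi,\sigma,m}(\mathcal{S}(K\backslash G,\mathcal{V}_{\xi})_{\chi_{\pi}}(\mu))$, where by construction $\phi_{\pi,\sigma,m}=(*\Theta_{\sigma})\circ r_m$ is the composite along the top and right edges of the diagram. Since the diagram commutes, this composite equals $\widetilde{r}_m\circ(*_H\Theta_{\sigma})$ along the left and bottom edges. Therefore $\phi_{\pi,\sigma,m}(\mathcal{S}(K\backslash G,\mathcal{V}_{\xi})_{\chi_{\pi}}(\mu))=\widetilde{r}_m\big((*_H\Theta_{\sigma})(\mathcal{S}(K\backslash G,\mathcal{V}_{\xi})_{\chi_{\pi}}(\mu))\big)$ is the image under $\widetilde{r}_m$ of a subspace of $C^{\omega}(K\backslash G,\mathcal{V}_{\xi})_{\chi_{\pi},\psi_{\sigma}}(\mu)$.

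The key inequality is then immediate: applying a linear map can only decrease dimension, so for every $m$,
$$\dim\phi_{\pi,\sigma,m}(\mathcal{S}(K\backslash G,\mathcal{V}_{\xi})_{\chi_{\pi}}(\mu))\leq \dim\big((*_H\Theta_{\sigma})(\mathcal{S}(K\backslash G,\mathcal{V}_{\xi})_{\chi_{\pi}}(\mu))\big)\leq \dim C^{\omega}(K\backslash G,\mathcal{V}_{\xi})_{\chi_{\pi},\psi_{\sigma}}(\mu).$$
Here the second inequality uses that $*_H\Theta_{\sigma}$ lands in $C^{\omega}(K\backslash G,\mathcal{V}_{\xi})_{\chi_{\pi},\psi_{\sigma}}$, as verified in the computations preceding the diagram (the $\chi_\pi$-eigenvalue check, the $\psi_\sigma$-eigenvalue check, and analyticity via elliptic regularity), and that $*_H\Theta_\sigma$ preserves $H\cap K$ types, so it carries the $\mu$-isotypic subspace into the $\mu$-isotypic subspace. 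Taking $m\to\infty$ and invoking Lemma 2.4 yields $m(\sigma,\pi|_H)\leq \dim C^{\omega}(K\backslash G,\mathcal{V}_{\xi})_{\chi_{\pi},\psi_{\sigma}}(\mu)$, and the final sentence of the Proposition is then trivial.

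I expect there to be essentially no obstacle, since all the real work—constructing the four maps, proving each lands in the asserted target space, and checking the square commutes—has already been done in the body of the section; the Proposition is a formal consequence. The only points requiring a word of care are (i) confirming that $*_H\Theta_\sigma$ genuinely restricts to the $\mu$-isotypic subspaces so that the bottom-left corner of the $\mu$-diagram is indeed $C^{\omega}(K\backslash G,\mathcal{V}_{\xi})_{\chi_{\pi},\psi_{\sigma}}(\mu)$ (this follows from right $H$-equivariance, hence $H\cap K$-equivariance, of the map), and (ii) making sure the bound is uniform in $m$ so that the limit in Lemma 2.4 is bounded by the single number $\dim C^{\omega}(K\backslash G,\mathcal{V}_{\xi})_{\chi_{\pi},\psi_{\sigma}}(\mu)$—which it is, since the right-hand side does not depend on $m$ at all.
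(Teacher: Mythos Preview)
Your proposal is correct and follows exactly the paper's own argument: use the commutativity of the square to factor $\phi_{\pi,\sigma,m}$ as $\widetilde{r}_m\circ(*_H\Theta_{\sigma})$, observe that the image of $\widetilde{r}_m$ has dimension at most that of its domain $C^{\omega}(K\backslash G,\mathcal{V}_{\xi})_{\chi_{\pi},\psi_{\sigma}}(\mu)$, and then pass to the limit via Lemma~2.4. Your write-up is somewhat more explicit about the $\mu$-isotypic compatibility and the uniformity in $m$, but the substance is identical to the paper's three-line proof.
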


\begin{proof} To prove the proposition, we utilize Lemma 2.4 and the above commutative diagram. We may write $\phi_{\pi,\sigma,m}=\widetilde{r}_m\circ *_H {\Theta_{\sigma}}$, and the dimension of the image of $\widetilde{r}_m$ cannot exceed the dimension of its domain. Taking the limit and applying Lemma 2.4, the result follows.
\end{proof}

\begin{theorem} Suppose $G$ is a reductive Lie group of Harish-Chandra class and suppose $H\subset G^{\tau}\subset G$ is an open subgroup of a symmetric subgroup of $G$. Choose a maximal compact subgroup $K\subset G$ such that $H\cap K\subset H$ is maximal compact, and denote by $\theta$ the associated Cartan involutions of both $G$ and $\mathfrak{g}$. Let $\mathfrak{s}$ be the $-1$ eigenspace of $\theta$ on $\mathfrak{g}$, and let $\mathfrak{q}$ be the  $-1$ eigenspace of $\tau$ on $\mathfrak{g}$. Suppose $H\cong K_0\times H_1$ where $K_0$ is compact and $H_1$ is noncompact. Let $\mathfrak{b}\subset \mathfrak{h}_1\cap \mathfrak{s}$ be a maximal abelian subspace, and let $\mathfrak{a}\subset \mathfrak{s}\cap \mathfrak{q}$ be a maximal abelian subspace. Assume 
$$Z_{H_1\cap K}(\mathfrak{a})Z_{H_1\cap K}(\mathfrak{b})=H_1\cap K.$$
If $\pi$ is a discrete series representation of $G$, then the direct integral decomposition of the restriction $\pi|_H$ has finite multiplicities.
\end{theorem}

Observe that $H$ may not be of Harish-Chandra class even though we have assumed $G$ to be of Harish-Chandra class. However, we observe that the identity component $H_e$ is always of Harish-Chandra class. Indeed, referring to the definition of Harish-Chandra class in the introduction, the first condition follows from the well-known fact that the fixed point set of a complex reductive Lie algebra under an involutive automorphism is a complex reductive Lie algebra. For the second condition, we observe 
$$\operatorname{Ad}(G^{\tau})\subset \operatorname{Ad}(G)^{\tau}\subset (\operatorname{Int}\mathfrak{g}_{\mathbb{C}})^{\tau}.$$
Therefore, $$\operatorname{Ad}(H_e)=\operatorname{Ad}((G^{\tau})_e)\subset (\operatorname{Int}(\mathfrak{g}_{\mathbb{C}})^{\tau})_e=\operatorname{Int}(\mathfrak{g}_{\mathbb{C}}^{\tau}).$$
The third condition follows from part (b) of Proposition 7.20 of \cite{Kn2}. The fourth condition is automatically satisfied since $H_e$ is connected.

The statement of the Theorem for $H_e$ then follows immediately from Theorem 4.1 and Proposition 5.1, and the statement for $H$ follows immediately from the statement for $H_e$. Observe that the symmetric pairs $(O(2n,1),O(k)\times O(2n-k,1))$, $(U(n,1),U(k)\times U(n-k,1))$, and $(\operatorname{Sp}(n,1),\operatorname{Sp}(k)\times \operatorname{Sp}(n-k,1))$ all satisfy the hypothesis of the Theorem. 

It was proven in \cite{OV1} that the discrete series occurring in $\pi|_H$ have finite multiplicities. Our Theorem says that the continuous spectrum has finite multiplicities as well.

\section{The Pair $(U(n,1),U(1)\times U(n-1,1))$}

In this section, we prove the following slightly stronger result for the symmetric pair $(U(n,1),U(1)\times U(n-1,1))$. 

\begin{proposition} Suppose $\pi$ is a discrete series representation of $G=U(n,1)$, and consider the symmetric subgroup $H=U(1)\times U(n-1,1)$. Decompose $$\pi|_{U(1)}\cong \bigoplus_{\psi_m\in \widehat{U(1)}} \psi_m\otimes \tau_m$$
into irreducibles under the action of $U(1)$. Here $\tau_m$ is a unitary representation of $U(n-1,1)$. Next, write
$$\tau_m=\int_{\sigma(\delta,\nu)\in \widehat{U(n-1,1)}_{\text{temp.}}} m(\sigma(\delta,\nu),\tau_m)\sigma(\delta,\nu)$$
as a direct integral of irreducible representations of $U(n-1,1)$ (See Section 3 for notation). Then $m(\sigma(\delta,\nu),\tau_m)\neq 0$ for finitely many elliptic parameters $\delta$ and $m(\sigma(\delta,\nu), \tau_m)<\infty$ for all $\sigma,\nu$. In particular, at most finitely many discrete series occur in $\tau_m$. 
\end{proposition}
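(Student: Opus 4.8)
The plan is to separate the two halves of the statement. The finite–multiplicity assertion is essentially a consequence of what has already been proved: the pair $(U(n,1),U(1)\times U(n-1,1))$ satisfies the hypotheses of Theorem 5.2, and if $\sigma_1\in\widehat{U(n-1,1)}_{\text{temp.}}$ then $\psi_m\otimes\sigma_1$ is an irreducible tempered representation of $H=U(1)\times U(n-1,1)$ with $m(\sigma_1,\tau_m)=m(\psi_m\otimes\sigma_1,\pi|_H)$, which is finite by Theorem 5.2. So the real content is the first claim, that for each fixed $m$ only finitely many elliptic parameters $\delta$ satisfy $m(\sigma(\delta,\nu),\tau_m)\neq 0$; once that is in hand, ``finitely many discrete series in $\tau_m$'' follows by combining the two halves, since each of the finitely many elliptic parameters gives at most finitely many discrete series and each with finite multiplicity.

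For the first claim I would use the K\"ahler realization of $\pi$. Realize $\pi$ by Schmid's theorem as the space of $L^2$ harmonic $\mathcal L$-valued $(0,q)$-forms on the ball $\mathbb B^n=G/K$, where $\mathcal L$ is the relevant $G$-homogeneous holomorphic line bundle and $q$ the cohomological degree attached to $\pi$. The projection $\mathbb B^n\to\mathbb B^{n-1}=U(n-1,1)/(U(n-1)\times U(1))$, $(w_1,w')\mapsto w'$, is a disc fibration that is equivariant for $H$: the factor $U(1)$ rotates the disc fibre and $U(n-1,1)$ acts on the base, acting on the fibre coordinate through the automorphy cocycle. Decomposing $\pi$ under the fibre rotation gives $\tau_m$ as the $m$-th Fourier component; writing a harmonic form as a part not involving $d\bar w_1$ plus a part of the form $d\bar w_1\wedge(\cdots)$ and integrating out the fibre variable realizes $\tau_m$ as a finite sum of spaces of $L^2$ harmonic forms on $\mathbb B^{n-1}$ valued in $U(n-1,1)$-homogeneous bundles indexed by a ``radial'' quantum number. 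The holomorphic case $q=0$ is the model computation: there $\tau_m$ is the single scalar holomorphic discrete series of $U(n-1,1)$ whose Bergman weight is shifted from that of $\pi$, as one sees from $\int_{|w_1|^2<1-|w'|^2}|w_1|^{2m}(1-|w_1|^2-|w'|^2)^{s}\,dw_1=c_{m,s}(1-|w'|^2)^{m+s+1}$. In general, the Casimir $\Omega_{\mathfrak g}$ acts on $\pi$ by the fixed scalar $\chi_\pi(\Omega_{\mathfrak g})$; decomposing $\Omega_{\mathfrak g}=\Omega_{\mathfrak u(1)}+\Omega_{\mathfrak u(n-1,1)}+\Omega_{\mathfrak q}$ and evaluating on $\tau_m$, the term $\Omega_{\mathfrak u(1)}$ is the scalar $-m^2$ and the remaining $\mathfrak q$-part is a radial operator whose eigenvalues on the $L^2$-harmonic subspace form a discrete sequence tending to $+\infty$. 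Hence only finitely many radial modes occur in $\tau_m$, and on each mode all of $\mathcal{ZU}(\mathfrak u(n-1,1))$ acts by a scalar determined by $\chi_\pi$, $m$, and the mode; so the $U(n-1,1)$-constituents of $\tau_m$ have infinitesimal characters in a finite set. Since a discrete series $\delta$ of a cuspidal Levi of $U(n-1,1)$ is determined up to finite ambiguity by the infinitesimal character of $\sigma(\delta,\nu)$ together with the imaginary line on which $\nu$ runs, only finitely many elliptic $\delta$ occur.

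The main obstacle is the bookkeeping in the case $q>0$: organizing the $L^2$-harmonicity equation for $\mathcal L$-valued $(0,q)$-forms along the disc fibration into a base part and a (hypergeometric/Jacobi-type) radial part on $(0,1)$ with the Bergman weight, verifying that the $L^2$ radial spectrum relevant to harmonic forms is discrete and bounded in terms of $\chi_\pi(\Omega_{\mathfrak g})$ and $m$, and tracking the splitting caused by the presence or absence of $d\bar w_1$. An alternative that avoids the explicit ODE is to invoke Proposition 5.1: one is then reduced to showing that $C^\omega(K\backslash G,\mathcal V_\xi)_{\chi_\pi,\psi_{\psi_m\otimes\sigma(\delta,\nu)}}(\mu)=0$ for all but finitely many elliptic $\delta$, which can be extracted from an estimate on the leading exponents along $A$ of its elements forcing the elliptic part of $\psi_{\psi_m\otimes\sigma(\delta,\nu)}$ to consist of coordinates of the Harish-Chandra parameter of $\chi_\pi$ — but making that estimate precise is essentially the same computation.
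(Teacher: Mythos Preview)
Your reduction of the finite-multiplicity assertion to Theorem~5.2 is correct, and your instinct to use the K\"ahler realization and the disc fibration $D_n\to D_{n-1}$ is exactly the geometry the paper exploits. But the argument you give for ``finitely many elliptic $\delta$'' has a real gap, and the paper proceeds differently.

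Your Casimir route has two weak points. First, the ``radial modes'' are never defined as $U(n-1,1)$-stable subspaces: the $U(n-1,1)$-action moves the fiber coordinate through the automorphy cocycle, so an eigenspace of a purely radial operator on the disc need not be $H$-invariant. Second, even if each mode were $H$-stable, the decomposition $\Omega_{\mathfrak g}=\Omega_{\mathfrak h}+\Omega_{\mathfrak q}$ only tells you that $\Omega_{\mathfrak u(n-1,1)}$ takes finitely many values on $\tau_m$; for $n>2$ the center $\mathcal{ZU}(\mathfrak u(n-1,1))$ is strictly larger than $\mathbb C[\Omega_{\mathfrak u(n-1,1)}]$, so your assertion that ``on each mode all of $\mathcal{ZU}(\mathfrak u(n-1,1))$ acts by a scalar determined by $\chi_\pi$, $m$, and the mode'' is exactly what needs proof and does not follow from what you have written. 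You acknowledge the obstacle yourself, but it is not merely bookkeeping: without this step the chain of implications to ``finitely many $\delta$'' breaks.

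The paper avoids the radial ODE and the infinitesimal-character bookkeeping entirely. It proves two concrete lemmas about a harmonic form $F=F_0+F_1\wedge d\overline z$ in $(\pi_\lambda)_m$: from $\overline\partial F=0$ and $\overline\partial^{\,*}F=0$ one shows (Lemma~6.2) that if $F$ vanishes on $D_{n-1}$ then the leading term of its Taylor expansion in $z,\overline z$ is purely holomorphic in the $F_0$ component and purely antiholomorphic in the $F_1$ component; and from the $U(1)$-equivariance (Lemma~6.3) one shows that the only monomials $z^j\overline z^{\,l}$ that can appear satisfy $l-j=m-s$ or $l-j=m-s+1$, with $s$ a weight of $V_{\lambda-\widetilde\rho}|_{U(1)}$. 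Combining these, a purely holomorphic or antiholomorphic leading term has degree at most $|m-s|+1$, so no nonzero $F\in(\pi_\lambda)_m$ can vanish to higher order along $D_{n-1}$. Hence the \O rsted--Vargas jet map $r_l$ of Proposition~2.1 is already \emph{injective} on $(\pi_\lambda)_m$ for the single value $l=|m-s|+1$. Polar decomposition then embeds $\tau_m$ unitarily into $L^2$ of a \emph{fixed finite-dimensional} $(H\cap K)$-bundle over $D_{n-1}$, and Frobenius reciprocity finishes: $\sigma(\delta,\nu)$ can occur only if it contains one of the finitely many $(H\cap K)$-types of that bundle, which forces $\delta$ into a finite set and bounds all multiplicities. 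This replaces your spectral analysis of $\Omega_{\mathfrak q}$ by a finite-order vanishing bound, and replaces the infinitesimal-character count by a $K$-type count.
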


The proof of this result will require some notation. First, recall the bounded domain model for $U(n,1)/U(n)\times U(1)$. Let $G=U(n,1)$, let $T\subset U(n,1)$ be the diagonal torus, and let $K=U(n)\times U(1)\subset G$ be the maximal compact subgroup of $G$. Let $G_{\mathbb{C}}=\operatorname{GL}(n+1,\mathbb{C})$ denote the complexification of $G$, and let $K_{\mathbb{C}}=\operatorname{GL}(n,\mathbb{C})\times \operatorname{GL}(1,\mathbb{C})$ denote the complexification of $K$. Let 
$$P_{\mathbb{C}}^+=\left\{\left(\begin{matrix} I & B\\ 0 & 1\end{matrix}\right)\Big|\ B\in \mathbb{C}^n\right\}\subset \operatorname{GL}(n+1,\mathbb{C})$$
and let 
$$P_{\mathbb{C}}^-=\left\{\left(\begin{matrix} I & 0\\ C & 1\end{matrix}\right)\Big|\ C\in \mathbb{C}^n\right\}\subset \operatorname{GL}(n+1,\mathbb{C}).$$
Then the product map
$$P_{\mathbb{C}}^+\times K_{\mathbb{C}}\times P_{\mathbb{C}}^-\hookrightarrow \operatorname{GL}(n+1,\mathbb{C})$$
is a diffeomorphism onto an open subset of $G_{\mathbb{C}}$, which contains $G=\operatorname{U}(n,1)$.

Suppose $$g=\left(\begin{matrix} A & B\\ C & d\end{matrix}\right)\in U(n,1)$$
with $A\in M(n,\mathbb{C})$ an $n$ by $n$ matrix, $B$ an $n$ by $1$ matrix, $C$ a $1$ by $n$ matrix, and $d$ a single entry.
Then we may decompose
$$g=\left(\begin{matrix} A & B\\ C & d\end{matrix}\right)=\left(\begin{matrix} I & Bd^{-1}\\ 0 & 1\end{matrix}\right)\left(\begin{matrix} A-Bd^{-1}C & 0\\ 0 & d \end{matrix}\right)\left(\begin{matrix} I & 0\\ d^{-1}C & 1\end{matrix}\right)$$
with respect to the above decomposition. We will call the first term $p_{\mathbb{C}}^+(g)$ and the second term $\kappa_{\mathbb{C}}(g)$. One checks that the set of possible $Bd^{-1}$ that can arise in this way is precisely the set of points in the unit disc, $D_n$, in $\mathbb{C}^n$. We define an action of $U(n,1)$ on $D_n$ by thinking of $D_n\subset P^+_{\mathbb{C}}$ and taking the $P^+_{\mathbb{C}}$ part of $g\cdot z$ for $g\in U(n,1)$ and $z\in D_n$. Written out this action looks like

$$\left(\begin{matrix} A & B\\ C & d\end{matrix}\right)\cdot Z=\frac{AZ+B}{CZ+d}.$$

One checks that the isotropy group at zero is $K=U(n)\times U(1)$; thus, we have an identification $U(n,1)/(U(n)\times U(1))\cong D_n$ (For more, see for instance page 152 of \cite{Kn}).

Let $\lambda\in \widehat{T}=\operatorname{Hom}(T,\mathbb{S}^1)$ be a regular, unitary character of $T$. Given $\mu\in \mathfrak{t}^*$, define $H_{\mu}\in \mathfrak{t}_{\mathbb{C}}$ by $\mu(H)=B(H,H_{\mu})$ for all $H\in \mathfrak{t}$. Then define an inner product on $\mathfrak{t}^*$ by $(\mu_1,\mu_2)=B(H_{\mu_1},H_{\mu_2})$ for $\mu_1,\mu_2\in \mathfrak{t}^*$. Let $\Delta$ be the set of roots of $\mathfrak{g}_{\mathbb{C}}$ with respect to $\mathfrak{t}_{\mathbb{C}}$. Define a set of positive roots,
$$\Delta^+=\{\alpha\in \Delta|\ (\alpha, \lambda)>0\}.$$

Let $\Delta_c^+$ denote the compact, imaginary roots in $\Delta^+$. The $G$ invariant complex structure on $G/K$ gives rise to a $K$-equivariant splitting
$$T_e(G/K)\otimes \mathbb{C}\cong T^{(1,0)}_e(G/K)\oplus T^{(0,1)}_e(G/K)$$
that is preserved under the bracket. Note that $T^{(1,0)}_e(G/K)\subset \mathfrak{g}_{\mathbb{C}}/\mathfrak{k}_{\mathbb{C}}$ is of the form $\mathfrak{b}_{\mathbb{C}}/\mathfrak{k}_{\mathbb{C}}$ where $\mathfrak{b}_{\mathbb{C}}\subset \mathfrak{g}_{\mathbb{C}}$ is a Lie subalgebra containing $\mathfrak{k}_{\mathbb{C}}$. The noncompact, imaginary root spaces contained in $\mathfrak{b}_{\mathbb{C}}$ determine a choice of positive noncompact imaginary roots, $\Phi_n^+$. Let $\Phi^+$ be a choice of positive roots with $\Phi^+\cap \Delta_n=\Phi_n^+$ and $\Phi_c^+=\Delta_c^+$. Define 
$$\widetilde{\rho}=\frac{1}{2}\sum_{\alpha\in \Phi_n^+} \alpha.$$

Let $(\xi_{\lambda-\widetilde{\rho}},V_{\lambda-\widetilde{\rho}})$ be the irreducible representations of $K=U(n)\times U(1)$ with highest weight $\lambda-\widetilde{\rho}$. Let $\mathcal{V}_{\lambda-\widetilde{\rho}}$ be the corresponding $G$-equivariant, holomorphic vector bundle on $G/K$. The discrete series with Harish-Chandra parameter $\lambda$ is isomorphic to 
$$L^2(G/K,\mathcal{V}_{\lambda-\widetilde{\rho}}\otimes T^{(0,q_{\lambda})}(G/K))_{\overline{\Box}}$$
where $\overline{\Box}=\overline{\partial} \overline{\partial}^*+\overline{\partial}^*\overline{\partial}$ and $q_{\lambda}=\#\left(\Phi_n^+\cap \Delta^+\right)$. Here the subscript $\overline{\Box}$ means that we are taking the kernel of the elliptic differential operator $\overline{\Box}$. This result is well-known. It is proved for sufficiently regular parameter in \cite{NO}, and Schmid remarks in \cite{S} that the general statement follows from his proof of the Blattner formula for groups of Hermitian type. Although Schmid does not carry out the details of the proof in this paper, they are very similar to his argument for the Paratharasy Dirac operator model on page 138 of \cite{S}.

Note that we have isomorphisms of vector bundles
$$\mathcal{V}_{\lambda-\widetilde{\rho}}=G\times_{K} V_{\lambda-\widetilde{\rho}}\cong GK_{\mathbb{C}}P_{\mathbb{C}}^-\times_{K_{\mathbb{C}}P_{\mathbb{C}}^-} V_{\lambda-\widetilde{\rho}}\cong D_n\times V_{\lambda-\widetilde{\rho}}.$$
In particular, the bundle $\mathcal{V}_{\lambda-\widetilde{\rho}}$ is holomorphically (though not equivariantly) trivial. The group $G=\operatorname{U}(n,1)$ acts on $D_n\times V_{\lambda-\widetilde{\rho}}$ by
$$g_1\cdot(z,v)=(g_1\cdot z,\xi_{\mathbb{C}}(\kappa_{\mathbb{C}}(g_1g))\xi_{\mathbb{C}}(\kappa_{\mathbb{C}}(g))^{-1}v).$$
Here we have chosen $g\in \operatorname{U}(n,1)$ with $p_{\mathbb{C}}^+(g)=z$ and $g_1\cdot z$ is the above action of $\operatorname{U}(n,1)$ on $D_n$. One checks that this definition is independent of the choice of $g$. Above, $\xi_{\mathbb{C}}$ denotes the holomorphic representation of $K_{\mathbb{C}}$ that is the complexification of the representation $\xi_{\lambda-\widetilde{\rho}}$ of $K$.

Let $\underline{V_{\lambda-\widetilde{\rho}}}$ denote the trivial bundle on $D_n$ with fiber $V_{\lambda-\widetilde{\rho}}$. We obtain a holomorphic isomorphism of vector bundles $$\mathcal{V}_{\lambda-\widetilde{\rho}}\otimes T^{(0,q_{\lambda})}(G/K)\cong\underline{V_{\lambda-\widetilde{\rho}}}\otimes T^{(0,q_{\lambda})}D_n$$
by tensoring the above isomorphism with the map on tangent bundles induced by $G/K\cong D_n$. 

Suppose $$F\in L^2(G/K,\mathcal{V}_{\lambda-\widetilde{\rho}}\otimes T^{(0,q_{\lambda})}(G/K))_{\overline{\Box}}\cong L^2(D_n,\underline{V_{\lambda-\widetilde{\rho}}}\otimes T^{(0,q_{\lambda})}D_n)_{\overline{\Box}}.$$
Let $w_1,\ldots,w_{n-1},w_n=z$ be holomorphic coordinates on $D_n\subset \mathbb{C}^n$ induced from the standard coordinates on $\mathbb{C}^n$. Write
$$F=\sum_I F_{0,I} d\overline{w_I}+\sum_J F_{1,J} d\overline{w_J}\wedge d\overline{z}.$$
Here $I\subset \{1,\ldots,n-1\}$ is a multi-index of cardinality $q_{\lambda}$, $J\subset \{1,\ldots,n-1\}$ is a multi-index of cardinality $q_{\lambda}-1$, and $F_{0,I},F_{1,J}:D_n\rightarrow V_{\lambda-\widetilde{\rho}}$ are analytic functions for each $I$ and $J$.

Then $\overline{\partial}F=0$ implies
$$\frac{\partial F_{0,I}}{\partial\overline{z}}=\sum_{i\in I}\epsilon_{i,I}\frac{\partial F_{1,I-i}}{\partial \overline{w_i}}$$
where $\epsilon_{i,I}=\pm 1$ for each $i$.

Next, we must calculate $\overline{\partial}^*$. We will calculate $\overline{\partial}^*$ by using the formula on page 86 of \cite{AV}, $$\overline{\partial}^*=-\# *\overline{\partial} *\#.$$
To use this formula, we must define the operators $\#$, $*$, and then calculate them in local coordinates. Let 
$$*:\ C^{\omega}(D_n,\underline{V_{\lambda-\widetilde{\rho}}}\otimes T^{(0,q_{\lambda})}D_n)\longrightarrow C^{\omega}(D_n,\underline{V_{\lambda-\widetilde{\rho}}}\otimes T^{(0,q_{\lambda})}D_n)$$
be the usual Hodge $*$ operator. Let $m=(m_{ij})$ be the invariant Hermitian metric on $T^*D_n$ associated to the invariant Riemannian metric on $D_n$ (well-defined up to multiplication by a positive scalar) written in the above coordinates, ie $$m=\sum_{i,j=1}^{n} m_{ij} dw_i d\overline{w_j}.$$
Then 
$$m_{ii}=\frac{1-\sum_{k\neq i}|w_k|^2}{(1-|w|^2)^2},\ m_{ij}=\frac{\overline{w_{i}}w_j}{(1-|w|^2)^2}\ \text{if}\ i\neq j.$$

Let $m^{-1}=(m^{ij})$ be the inverse matrix. Applying Cramer's rule, we observe that the diagonal entries of $m^{-1}$ have the constant term $1$ in their Taylor expansions at zero while the off diagonal $ij$ term is of the form  
$$m^{ij}=w_i\overline{w_j}n^{ij}.$$ 
Here $n^{ij}$ is an analytic function on $D_n$ for $i\neq j$.

Note that $m^{-1}\in \operatorname{End}(T^*D_n)$ naturally gives rise to an endomorphism $M_{q_{\lambda}}\in \operatorname{End}(T^{(0,q_{\lambda})}D_n)$. We express $M_{q_{\lambda}}=(M_{q_{\lambda}}^{I,J})$ in the coordinates $d\overline{w_I}$ with $|I|=q_{\lambda}$ a multi-index. The diagonal entries of $M_{q_{\lambda}}$ are analytic functions with constant term one in their Taylor expansions at $w=0$. Suppose $I,J$ are multi-indices with $I\cap J=K$. Then the $I,J$ off diagonal entry of $M_{q_{\lambda}}$ is of the form
$$M_{q_{\lambda}}^{I,J}=w_{I-K}\overline{w_{J-K}}N_{q_{\lambda}}^{I,J}.$$
The function $N_{q_{\lambda}}^{I,J}$ is analytic on $D_n$. 

In coordinates, we obtain 
$$*(F_{0,I}d\overline{w_I})$$
$$=\det(m_{ij})\sum_{I\cap K=L,\ n\in K}F_{0,I}\epsilon_{I,K} w_{I-L}\overline{z}\overline{w_{K-L-n}}N_{q_{\lambda}}^{I,K}d\overline{w_{K^{c}}}$$
$$+\det(m_{ij})\sum_{I\cap K=L,\ n\notin K}F_{0,I}\epsilon_{I,K} M_{q_{\lambda}}^{I,K}d\overline{w_K^{c}}.$$
Here $K^c=\{1,\ldots,n\}-K$ is the complement of $K$ and $\epsilon_{I,K}=\pm 1$.
Similarly, we obtain
$$*(F_{1,J}d\overline{w_J}\wedge d\overline{z})$$
$$=\det(m_{ij})\sum_{K\cap(J\cup n)=L,\ n\notin K} F_{1,J}\epsilon_{1,J,K}M_{q_{\lambda}}^{J\cup n,K}d\overline{w_{K^c}}$$
$$+\det(m_{ij})\sum_{K\cap(J\cup n)=L,\ n\in K} F_{1,J}\epsilon_{1,J,K}M_{q_{\lambda}}^{J\cup n,K}d\overline{w_{K^c}}.$$
Here $\epsilon_{I,J,K}=\pm 1$.

In order to define $\overline{\partial}^*$, we need another operator as well. Let
$$A\in C^{\omega}(D_n,\operatorname{Hom}_{\mathbb{C}-\text{anti}}(\underline{V_{\lambda-\widetilde{\rho}}},\underline{V_{\lambda-\widetilde{\rho}}}^*))$$ be the Hermitian metric on the bundle $\underline{V_{\lambda-\widetilde{\rho}}}$. Here $\operatorname{Hom}_{\mathbb{C}-\text{anti}}(V_{\lambda-\widetilde{\rho}},V_{\lambda-\widetilde{\rho}}^*)$ denotes the space of $\mathbb{C}$ antilinear homomorphisms from $V_{\lambda-\widetilde{\rho}}$ to $V_{\lambda-\widetilde{\rho}}^*$. 
Define $$\#:\ C^{\omega}(D_n,\underline{V_{\lambda-\widetilde{\rho}}}\otimes T^{(0,q_{\lambda})}D_n)\longrightarrow C^{\omega}(D_n,\underline{V_{\lambda-\widetilde{\rho}}}\otimes T^{(0,q_{\lambda})}D_n)$$
by $$F\mapsto \overline{A}\overline{F}.$$
(One might write $\overline{(A\otimes I)F}$ to be more precise).

Following \cite{AV} page 86, we define
$$\overline{\partial}^*=-\# *\overline{\partial} *\#.$$

\begin{lemma} Suppose $F\in C^{\omega}(D_n,\underline{V_{\lambda-\widetilde{\rho}}}\otimes T^{(0,q_{\lambda})}D_n)_{\overline{\Box}}$, assume $F|_{D_{n-1}}=0$, and write $$F=\sum_I F_{0,I} d\overline{w_I}+\sum_J F_{1,J} d\overline{w_J}\wedge d\overline{z}=F_0+F_1.$$
In the Taylor expansion of $F$ at zero in the variables $z=w_n$, $\overline{z}=\overline{w_n}$, the leading term is holomorphic in $z$ in the $F_0$ part and antiholomorphic in $z$ in the $F_1$ part.
\end{lemma}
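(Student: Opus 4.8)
The plan is to read off the lowest-order term directly from the equations $\overline{\partial}F=0$ and $\overline{\partial}^*F=0$, organizing every operator in sight by its effect on the total degree in the pair of normal variables $z=w_n$, $\overline{z}=\overline{w_n}$. First I would note that, since $F$ lies in the $L^2$-kernel of $\overline{\Box}=\overline{\partial}\,\overline{\partial}^*+\overline{\partial}^*\overline{\partial}$ on the complete K\"ahler manifold $G/K\cong D_n$, integration by parts (no boundary terms) gives $\|\overline{\partial}F\|^2+\|\overline{\partial}^*F\|^2=\langle\overline{\Box}F,F\rangle=0$, so $\overline{\partial}F=0$ and $\overline{\partial}^*F=0$. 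Then, since $F$ is analytic, I would expand $F=\sum_{d\ge 0}F^{(d)}$, with $F^{(d)}$ the part of the Taylor expansion at the origin homogeneous of degree $d$ in $(z,\overline{z})$ jointly, the coefficients being $V_{\lambda-\widetilde{\rho}}$-valued analytic functions of $w'=(w_1,\dots,w_{n-1})$. The hypothesis $F|_{D_{n-1}}=0$ says $F^{(0)}=0$, so $d_*:=\min\{d:\,F^{(d)}\ne 0\}\ge 1$ and "the leading term of $F$'' is $F^{(d_*)}=F_0^{(d_*)}+F_1^{(d_*)}$ with $F_0^{(d_*)}=\sum_I F_{0,I}^{(d_*)}\,d\overline{w_I}$, $F_1^{(d_*)}=\sum_J F_{1,J}^{(d_*)}\,d\overline{w_J}\wedge d\overline{z}$.

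The key step is a degree-shift bookkeeping for $\overline{\partial}$ and $\overline{\partial}^*$. Since $\underline{V_{\lambda-\widetilde{\rho}}}$ is the trivial bundle, $\overline{\partial}=\sum_{j=1}^n d\overline{w_j}\wedge\tfrac{\partial}{\partial\overline{w_j}}$, and the only term that lowers the $(z,\overline{z})$-degree is $d\overline{z}\wedge\tfrac{\partial}{\partial\overline{z}}$, which lowers it by one and moreover annihilates the $F_1$-part (already carrying a $d\overline{z}$). For $\overline{\partial}^*=-\#*\overline{\partial}*\#$: passing through the two $\#$'s conjugates $F$ to $\overline{F}$ and back, so any derivative actually landing on $F$ in a term of $\overline{\partial}^*F$ is, after this conjugation, a \emph{holomorphic} derivative $\tfrac{\partial}{\partial w_j}$; hence $\overline{\partial}^*$ contains no $\tfrac{\partial}{\partial\overline{z}}$, and the only term of it capable of lowering the $(z,\overline{z})$-degree is the one carrying $\tfrac{\partial}{\partial z}$, namely $\iota_{\partial/\partial\overline{z}}\!\circ\!\tfrac{\partial}{\partial z}$ times a coefficient assembled from the entries of $m^{-1}$, from $M_{q_{\lambda}}$, $\det(m_{ij})$ and the fibre metric $A$. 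Evaluating that coefficient at $z=0$ (the part that genuinely lowers the degree) gives an \emph{invertible} matrix-valued function $c(w')$ on $D_{n-1}$: at $z=0$ the metric is block diagonal with $m_{nn}(0,w')=(1-|w'|^2)^{-1}$ and $A(w',0)$ positive definite. The remaining pieces of $\overline{\partial}^*$ (connection/Christoffel terms, and terms where $\overline{\partial}$ differentiates a metric coefficient rather than $F$) are multiplications, hence cannot lower the degree at all. In short $\overline{\partial}=d\overline{z}\wedge\tfrac{\partial}{\partial\overline{z}}+R$ and $\overline{\partial}^*=c(w')\,\iota_{\partial/\partial\overline{z}}\!\circ\!\tfrac{\partial}{\partial z}+S$, where $R$ preserves the $(z,\overline{z})$-degree, $S$ preserves or raises it, and $c(w')$ is invertible on $D_{n-1}$.

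Granting this, I would finish by extracting degree-$(d_*-1)$ components. From $\overline{\partial}F=0$: $R$ applied to $F^{(d_*-1)}=0$ contributes nothing, so $0=(\overline{\partial}F)^{(d_*-1)}=d\overline{z}\wedge\tfrac{\partial}{\partial\overline{z}}F^{(d_*)}=\sum_I\bigl(\tfrac{\partial}{\partial\overline{z}}F_{0,I}^{(d_*)}\bigr)\,d\overline{z}\wedge d\overline{w_I}$, whence $\tfrac{\partial}{\partial\overline{z}}F_{0,I}^{(d_*)}=0$ for all $I$: the $F_0$-part of the leading term is holomorphic in $z$. From $\overline{\partial}^*F=0$: $S$ applied to $F^{(\le d_*-1)}=0$ contributes nothing, so $0=(\overline{\partial}^*F)^{(d_*-1)}=c(w')\,\iota_{\partial/\partial\overline{z}}\tfrac{\partial}{\partial z}F^{(d_*)}=\pm c(w')\sum_J\bigl(\tfrac{\partial}{\partial z}F_{1,J}^{(d_*)}\bigr)\,d\overline{w_J}$, and invertibility of $c(w')$ on $D_{n-1}$ forces $\tfrac{\partial}{\partial z}F_{1,J}^{(d_*)}=0$ for all $J$: the $F_1$-part of the leading term is antiholomorphic in $z$. (If one of $F_0^{(d_*)}$, $F_1^{(d_*)}$ happens to vanish, its half of the claim is trivial, so no comparison of the lowest orders of $F_0$ and $F_1$ is needed.)

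The hard part will be the bookkeeping for $\overline{\partial}^*$ in the middle paragraph: one must push the explicit formulas recorded above for $*$, for $\#$ and for $\overline{\partial}^*=-\#*\overline{\partial}*\#$ far enough to verify (i) that no term of $\overline{\partial}^*$ differentiates $F$ in $\overline{z}$; (ii) that, among the terms differentiating $F$ in $z$, all but one carry a coefficient vanishing on $\{z=0\}$ — this uses the off-diagonal form $m^{ij}=w_i\overline{w_j}n^{ij}$ (so $m^{i\bar n}\sim w_i\overline{z}$) and $A(w)=I+O(|w|^2)$, so that those terms do not in fact lower the $(z,\overline{z})$-degree — while the remaining one has the stated invertible coefficient $c(w')$; and (iii) that the zeroth-order (connection) pieces are harmless. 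The reduction to $\overline{\partial}F=\overline{\partial}^*F=0$ and the extraction of the degree-$(d_*-1)$ component are routine.
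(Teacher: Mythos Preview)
Your proposal is correct and follows essentially the same approach as the paper: both use $\overline{\partial}F=0$ to force holomorphy of the leading $F_0$-term and $\overline{\partial}^*F=0$ to force antiholomorphy of the leading $F_1$-term, by isolating the lowest-order piece in $(z,\overline{z})$ and exploiting the vanishing of the off-diagonal metric entries $m^{in}$ at $z=0$. Your degree-shift bookkeeping is a cleaner packaging of what the paper does by writing out the four terms in the coefficient of $d\overline{w_{K^c}}$ in $\overline{\partial}*\#F$ and killing three of them individually; the paper only checks invertibility of the surviving coefficient near $w'=0$ and then invokes analyticity, whereas your block-diagonal observation gives it on all of $D_{n-1}$ directly, but the mechanism is the same.
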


A more precise statement of the Lemma is as follows. Taylor expand each $F_{0,I}$ and $F_{1,J}$ in the variables $z=w_n$ and $\overline{z}=\overline{w_n}$ at zero, and suppose that the first nonvanishing term in any of the expansions is of order $k>0$. Then $F_{0,I}\sim z^kv_1+\cdots$ with $v_1\in V_{\lambda-\widetilde{\rho}}$ and $F_{1,J}\sim \overline{z}^kv_2+\cdots$ with $v_2\in V_{\lambda-\widetilde{\rho}}$. 

The first part follows from the equation $\overline{\partial}F=0$. Recall that this implied 
$$\frac{\partial F_{0,I}}{\partial\overline{z}}=\sum_{i\in I}\epsilon_{i,I}\frac{\partial F_{1,I-i}}{\partial \overline{w_i}}$$
where $\epsilon_{i,I}=\pm 1$ for each $i$. Choose $k_1+k_2=k-1$, and differentiate the above equation to obtain
$$\frac{\partial^{k_1+k_2+1} F_{0,I}}{\partial z^{k_1}\partial\overline{z}^{k_2+1}}=\sum_{i\in I}\epsilon_{i,I}\frac{\partial^{k_1+k_2+1} F_{1,I-i}}{\partial \overline{w_i}\partial^{k_1}z\partial^{k_2}\overline{z}}.$$
If we plug in $z=\overline{z}=0$ in the right hand side, we get zero, since we know that every $F_{0,J}$ and $F_{1,J}$ vanishes to degree $k-1$ on $D_{n-1}$ by assumption. Thus, the left hand side vanishes on $D_{n-1}$ as well. The only possibility left for a degree $k$ term is $z^k$. Thus, $F_{0,I}\sim z^kv_1+$ terms of higher order.

For the second part, we must use the $\overline{\partial}^*$ operator, which is a bit more complicated. First, observe that $*$ and $\#$ are injective maps; hence, $\overline{\partial}^*F=0$ iff $\overline{\partial}*\#F=0$. Write $F=\sum F_{0,I}d\overline{w_I}+\sum F_{1,J}d\overline{w_J}\wedge d\overline{z}$. Suppose $K\subset \{1,\ldots,n-1\}$ is a multi-index. The coefficient of $d\overline{w_{K^c}}$ in $\overline{\partial}*\#F=0$ is
$$\sum_{I\cap K=L} \epsilon_{I,K\cup n}\frac{\partial}{\partial \overline{z}}\left(\det(m_{ij})\overline{A}\overline{F_{0,I}}\overline{w_{I-L}}zw_{K-L}\overline{N_{q_{\lambda}}^{I,K\cup n}}\right)$$
$$+\sum_{I} \sum_{k\notin K, k\neq n} \epsilon_{I,K\cup k}\frac{\partial}{\partial \overline{w_k}}\left(\det(m_{ij})\overline{A}\overline{F_{0,I}} \overline{M_{q_{\lambda}}^{I,K\cup k}}\right)$$
$$+\sum_{J} \sum_{k\notin K, k\neq n} \epsilon_{1,J,K\cup k}\frac{\partial}{\partial \overline{w_k}}\left(\det(m_{ij})\overline{AF_{1,J}}\overline{M_{q_{\lambda}}^{J\cup n,K\cup k}}\right)$$
$$+\sum_{K\cap J=L} \epsilon_{1,J,K}\frac{\partial}{\partial \overline{z}}\left(\det(m_{ij})\overline{AF_{1,J}}\overline{M_{q_{\lambda}}^{J\cup n,K\cup n}}\right).$$
This follows from the calculation of $*$ in coordinates on the last page together with the definition of $\#$. Since $\overline{\partial}^*F=0$, we know that the above expression is zero for every $K$. Now, choose $k_1+k_2=k-1$, and apply the differential operator $$\frac{\partial^{k_1}}{\partial z^{k_1}}\frac{\partial^{k_2}}{\partial \overline{z}^{k_2}}$$
to the above expression and restrict to $D_{n-1}$. The second and third expressions are zero since $F_{0,I}$ and $F_{1,J}$ vanish to order $k-1$ in $z$ and $\overline{z}$ on $D_{n-1}$. The first expression vanishes because of the extra factor of $z$. If we use one of our $z$ derivatives to differentiate this $z$, then the $F_{1,J}$ vanishes. If we do not differentiate the $z$, then plugging in $z=0$ makes the expression vanish. Thus, we are left with
$$\sum_{K} \epsilon_{1,J,K}\frac{\partial^k}{\partial z^{k_1}\partial \overline{z}^{k_2+1}}\left(\det(m_{ij})\overline{AF_{1,J}}\overline{M_{q_{\lambda}}^{J\cup n,K\cup n}}\right)=0$$
on $D_{n-1}$. Applying the product rule and using the facts that $F_{1,J}$ vanishes to degree $k-1$ on $D_{n-1}$, we obtain
$$\sum_{K} \epsilon_{1,J,K}\overline{A}\left(\frac{\partial^k}{\partial z^{k_1}\partial \overline{z}^{k_2+1}}\overline{F_{1,J}}\right)\det(m_{ij})\overline{M_{q_{\lambda}}^{J\cup n,K\cup n}}=0$$
on $D_{n-1}$. Now, let $\mathcal{M}$ be the matrix that takes a vector $(v_J)$ to the vector $(\det(m_{ij})\sum_K \epsilon_{K,J,1}\overline{M_{q_{\lambda}}^{J\cup n,K\cup n}}v_J)$. At zero, $\mathcal{M}$ is a diagonal matrix with entries $\pm 1$ along the diagonal. Since the entries of $\mathcal{M}$ are analytic, we conclude that $\mathcal{M}$ must be invertible in a neighborhood of zero. In particular, $$(\det(m_{ij})\sum_K \epsilon_{K,J,1}\overline{M_{q_{\lambda}}^{J\cup n,K\cup n}}v_J)=0 \Rightarrow (v_J)=0$$ in a neighborhood of zero. We conclude
$$\overline{A}\left(\frac{\partial^k}{\partial z^{k_1}\partial \overline{z}^{k_2+1}}\overline{F_{1,J}}\right)=0$$
in a neighborhood of zero inside $D_{n-1}$ for every $K$ and every $k_1+k_2=k-1$. But, $\overline{A}$ is invertible everywhere; therefore,
$$\frac{\partial^k}{\partial z^{k_1}\partial \overline{z}^{k_2+1}}\overline{F_{1,J}}=0$$
in a neighborhood of zero in $D_{n-1}$. By analyticity of $F_{1,J}$, this expression is zero on all of $D_{n-1}$, and taking the complex conjugate, we obtain
$$\frac{\partial^k}{\partial \overline{z}^{k_1}\partial z^{k_2+1}}F_{1,J}=0$$
on $D_{n-1}$. We conclude $F_{1,J}\sim \overline{z}^kv_2+$ higher order terms for every $J$. The Lemma has been proven.

\bigskip

Now, we restrict $\pi_{\lambda}$ to $H=U(1)\times U(n-1,1)\subset G=U(n,1)$. Note that $H$ has a two dimensional center. We consider the one dimensional subgroup $$U(1)=\left\{k_{\theta}=\left(\begin{matrix} e^{i\theta} & 0 \\ 0 & I_n\end{matrix}\right)\right\}$$ 
where $I_n$ is the $n$ by $n$ identity matrix and the zeroes are $n$ by $1$ and $1$ by $n$ matrices. 
Note that this subgroup is central in $U(1)\times U(n-1,1)$, but it is not central in $U(n,1)$. Note that the irreducible, unitary characters of $U(1)$ are the characters $\psi_m: e^{i\theta}\mapsto e^{im\theta}$ for $m\in \mathbb{Z}$. We may decompose 
$$\pi_{\lambda}=\bigoplus_{m\in \mathbb{Z}}(\pi_{\lambda})_m\cong \bigoplus_{m\in \mathbb{Z}} \psi_m\otimes \operatorname{Hom}(\psi_m, \pi_{\lambda})=\bigoplus_{m\in \mathbb{Z}} \psi_m\otimes \tau_m$$
under the action of $U(1)$. Observe $\tau_m$ is a representation of $U(n-1,1)$. We identify $\pi_{\lambda}\cong L^2(D_n,\underline{V_{\lambda-\widetilde{\rho}}}\otimes T^{(0,q_{\lambda})}D_n)_{\overline{\Box}}$, and $(\pi_{\lambda})_m$ is therefore a subspace of these $L^2$-sections.

\begin{lemma} Suppose $F\in (\pi_{\lambda})_m\subset L^2(D_n,\underline{V_{\lambda-\widetilde{\rho}}}\otimes T^{(0,q_{\lambda})}D_n)_{\overline{\Box}}$, and write $$F=\sum_I F_{0,I} d\overline{w_I}+\sum_J F_{1,J} d\overline{w_J}\wedge d\overline{z}=F_0+F_1$$ in coordinates. Decompose $V_{\lambda-\widetilde{\rho}}=\bigoplus V_{\lambda-\widetilde{\rho}}^s$ under $U(1)$, and let $F_{0,I}=\sum F_{0,I}^s$ and $F_{1,J}=\sum F_{1,J}^s$ be the corresponding decompositions into components. Now, Taylor expand $F_{0,I}^s$ and $F_{1,J}^s$ in the variables $z$ and $\overline{z}$ at zero.
Then
$$F_{0,I}^s=\sum_{i=\max(0,s-m)}^{\infty} F_{0,I}^{s,i}z^i\overline{z}^{m-s+i},\ F_{1,J}^s=\sum_{i=\max(0,s-m-1)}^{\infty} F_{1,J}^{s,i}z^i\overline{z}^{m-s+1+i}.$$
\end{lemma}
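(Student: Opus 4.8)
The plan is to obtain the stated Taylor pattern directly from the fact that $F$ lies in the $\psi_m$-isotypic component for the circle subgroup $U(1)=\{k_{\theta}\}$; the only role of the harmonicity condition $F\in(\cdots)_{\overline{\Box}}$ is to guarantee, via elliptic regularity, that $F$ is real-analytic, so that its Taylor expansion in $z,\overline{z}$ at $0$ is well-defined and faithful. Accordingly, the first step is to write the $k_{\theta}$-action on $\pi_{\lambda}\cong L^2(D_n,\underline{V_{\lambda-\widetilde{\rho}}}\otimes T^{(0,q_{\lambda})}D_n)_{\overline{\Box}}$ completely explicitly. On the base $D_n$ the element $k_{\theta}$ acts by $(w_1,\ldots,w_{n-1},z)\mapsto(w_1,\ldots,w_{n-1},e^{i\theta}z)$, fixing $D_{n-1}=\{z=0\}$ pointwise. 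On the coefficient bundle, one checks that $k_{\theta}$ normalizes $P_{\mathbb{C}}^+$, $K_{\mathbb{C}}$, and $P_{\mathbb{C}}^-$, so that $g=p_{\mathbb{C}}^+(g)\kappa_{\mathbb{C}}(g)p_{\mathbb{C}}^-(g)$ yields $\kappa_{\mathbb{C}}(k_{\theta}g)=k_{\theta}\kappa_{\mathbb{C}}(g)$; hence the automorphy factor $\xi_{\mathbb{C}}(\kappa_{\mathbb{C}}(k_{\theta}g))\xi_{\mathbb{C}}(\kappa_{\mathbb{C}}(g))^{-1}$ collapses to the constant $\xi_{\mathbb{C}}(k_{\theta})$. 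Finally, on the $(0,q_{\lambda})$-form factor, $k_{\theta}$ fixes each $d\overline{w_j}$ with $j\le n-1$ and rescales $d\overline{z}$ by a fixed power of $e^{i\theta}$.

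Next I would decompose $V_{\lambda-\widetilde{\rho}}=\bigoplus_s V_{\lambda-\widetilde{\rho}}^s$ into weight spaces for $k_{\theta}$, with $k_{\theta}$ acting on $V_{\lambda-\widetilde{\rho}}^s$ by $e^{is\theta}$, and write correspondingly $F_{0,I}=\sum_s F_{0,I}^{s}$ and $F_{1,J}=\sum_s F_{1,J}^{s}$. Imposing $\pi_{\lambda}(k_{\theta})F=e^{im\theta}F$ and equating the coefficients of $d\overline{w_I}$ and of $d\overline{w_J}\wedge d\overline{z}$, then equating their $V_{\lambda-\widetilde{\rho}}^s$-parts, turns the equivariance, for each fixed $w'=(w_1,\ldots,w_{n-1})$, into the scalar functional equations
$$F_{0,I}^{s}(w',e^{-i\theta}z)=e^{i(m-s)\theta}F_{0,I}^{s}(w',z),\qquad F_{1,J}^{s}(w',e^{-i\theta}z)=e^{i(m-s+1)\theta}F_{1,J}^{s}(w',z),$$
where the extra $+1$ in the second equation is exactly the contribution of the $d\overline{z}$ factor that is present in the $F_1$-part but absent in the $F_0$-part.

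The remaining step is routine: for fixed $w'$, expand $F_{0,I}^{s}$ and $F_{1,J}^{s}$ in their convergent Taylor series $\sum_{p,q\ge 0}c_{p,q}z^p\overline{z}^q$ in $z,\overline{z}$ at $0$. Replacing $z$ by $e^{-i\theta}z$ multiplies the $(p,q)$-term by $e^{i(q-p)\theta}$, so comparison with the functional equations forces $c_{p,q}=0$ unless $q-p=m-s$ (for $F_{0,I}^{s}$) or $q-p=m-s+1$ (for $F_{1,J}^{s}$). Together with $p,q\ge 0$, writing $p=i$ gives exactly the stated expansions, with $i$ running over $i\ge\max(0,s-m)$ and $i\ge\max(0,s-m-1)$ respectively; the coefficients $F_{0,I}^{s,i}$ and $F_{1,J}^{s,i}$ are the corresponding $V_{\lambda-\widetilde{\rho}}^s$-valued functions of $w'$.

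The only place where real care is needed is the first step — making the $k_{\theta}$-action on the trivialized bundle precise. The computation of the automorphy factor rests on the identity $\kappa_{\mathbb{C}}(k_{\theta}g)=k_{\theta}\kappa_{\mathbb{C}}(g)$, which must be checked from the explicit $P_{\mathbb{C}}^+K_{\mathbb{C}}P_{\mathbb{C}}^-$ decomposition. More delicate is tracking the exact power of $e^{i\theta}$ by which $k_{\theta}$ acts on the $d\overline{z}$ direction in the $(0,q_{\lambda})$-form factor: it is this single unit, together with the correct sign and orientation conventions for the complex structure on $D_n$, that is responsible for the one-step shift between the two exponent patterns. Once that is pinned down, everything else is bookkeeping with geometric series.
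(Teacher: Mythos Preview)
Your proposal is correct and follows essentially the same route as the paper's proof: compute the $k_\theta$-action on the trivialized bundle using $\kappa_{\mathbb{C}}(k_\theta g)=k_\theta\kappa_{\mathbb{C}}(g)$, decompose into $U(1)$-weight components of $V_{\lambda-\widetilde{\rho}}$, and read off the constraint $q-p=m-s$ (resp.\ $m-s+1$) on Taylor monomials $z^p\overline{z}^q$, the shift coming from the factor $e^{-i\theta}$ that $k_\theta$ contributes on $d\overline{z}$. The only cosmetic difference is that you phrase the equivariance as a functional equation in $z$ before expanding, whereas the paper expands first and then applies $k_\theta$ term by term; your explicit remark that $\overline{\Box}$ enters only through elliptic regularity (to justify the Taylor expansion) is a point the paper leaves implicit.
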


\begin{proof} We begin the proof of the Lemma by calculating the action of $U(1)$ on $\underline{V_{\lambda-\widetilde{\rho}}}\otimes T^{(0,q_{\lambda})}D_n$. 
We compute
$$k_{\theta}\cdot (z,w_1,\ldots,w_{n-1},v\otimes d\overline{w_I})$$
$$=(e^{i\theta}z,w_1,\ldots,w_{n-1},\xi_{\mathbb{C}}(\kappa_{\mathbb{C}}(k_{\theta}g))\xi_{\mathbb{C}}(\kappa_{\mathbb{C}}(g))^{-1}v\otimes d\overline{w_I})$$
$$=(e^{i\theta}z,w_1,\ldots,w_{n-1},\xi_{\lambda-\widetilde{\rho}}(k_{\theta})v\otimes d\overline{w_I}).$$
Here $g\in U(n,1)$ is chosen so that $g\cdot 0=(z,w_1,\ldots,w_{n-1})$ and we have used $\xi_{\mathbb{C}}(\kappa_{\mathbb{C}}(k_{\theta}g))=\xi_{\lambda-\widetilde{\rho}}(k_{\theta})\xi_{\mathbb{C}}(\kappa_{\mathbb{C}}(g))$. Similarly, we have
$$k_{\theta}\cdot (z,w_1,\ldots,w_{n-1},v\otimes d{\overline{w_J}}\wedge d\overline{z})$$
$$=(e^{i\theta}z,w_1,\ldots,w_{n-1},\xi_{\mathbb{C}}(\kappa_{\mathbb{C}}(k_{\theta}g))\xi_{\mathbb{C}}(\kappa_{\mathbb{C}}(g))^{-1}v\otimes e^{-i\theta}d\overline{w_I}\wedge d\overline{z})$$
$$=(e^{i\theta}z,w_1,\ldots,w_{n-1},\xi_{\lambda-\widetilde{\rho}}(k_{\theta})v\otimes e^{-i\theta}d\overline{w_I}\wedge d\overline{z}).$$
Now, decompose $V_{\lambda-\widetilde{\rho}}=\oplus V_{\lambda-\widetilde{\rho}}^s$ as in the statement of the Lemma, and similarly decompose $F_{0,I}d\overline{w_I}=\sum F_{0,I}^sd\overline{w_I}$. Fix $s$ and Taylor expand 
$$F_{0,I}^sd\overline{w_I}=\sum_{j,l}F_{0,I}^{s,j,l}z^j\overline{z}^l$$
at zero in $z$ and $\overline{z}$. Here $F_{0,I}^{s,j,l}$ is an analytic function on $D_{n-1}$. Then we have
$$k_{\theta}\cdot \left(\sum F_{0,I}^{s,j,l}z^j\overline{z}^ld\overline{w_I}\right)=\xi_{\lambda-\widetilde{\rho}}(k_{\theta})\sum F_{0,I}^{s,j,l}e^{i(l-j)\theta}z^j\overline{z}^ld\overline{w_I}$$
$$=\sum F_{0,I}^{s,j,l}e^{i(s+l-j)\theta}z^j\overline{z}^ld\overline{w_I}.$$
Now, we assume $F\in (\pi_{\lambda})_m$. Thus, 
$$k_{\theta}\cdot \left(\sum F_{0,I}^{s,j,l}z^j\overline{z}^ld\overline{w_I}\right)=e^{im\theta}\left(\sum F_{0,I}^{s,j,l}z^j\overline{z}^ld\overline{w_I}\right).$$
In particular, $F_{0,I}^{s,j,l}\neq 0$ implies $s+l-j=m$ or $l-j=m-s$. Thus, the Taylor expansion of $F_{0,I}$ in the variables $z$ and $\overline{z}$ at zero is of the form
$$F_{0,I}=\sum F_{0,I}^{s,j,j+m-s}z^j\overline{z}^{j+m-s}d\overline{w_I}.$$

Now, consider the case $$F^s_{1,J}d\overline{w_J}\wedge d\overline{z}=\sum F_{1,J}^{s,j,l}z^j\overline{z}^ld\overline{w_J}\wedge d\overline{z}.$$
Then we have
$$k_{\theta}\cdot \left(\sum F_{1,J}^{s,j,l}z^j\overline{z}^ld\overline{w_I}\wedge d\overline{z}\right)=\xi_{\lambda-\widetilde{\rho}}(k_{\theta})\sum F_{0,I}^{s,j,l}e^{i(l-j)\theta}z^j\overline{z}^ld\overline{w_I}$$
$$=\sum F_{0,I}^{s,j,l}e^{i(s+l-j-1)\theta}z^j\overline{z}^ld\overline{w_I}.$$
And since $F\in (\pi_{\lambda})_m$, we must have $s+l-j-1=m$ or $l-j=1+m-s$. The lemma follows.
\end{proof}

Now, we have all of the Lemmas we need to prove the Proposition at the beginning of this section.

\begin{proof} We write $H=U(1)\times U(n-1,1)$, and we decompose $\pi_{\lambda}=\sum_m (\pi_{\lambda})_m$ as a representation of $U(1)$. Let $\tau$ be the involution of $G$ with $G^{\tau}=H$, and let $\theta$ be the Cartan involution of $G$ with $G^{\theta}=K$. Differentiate $\tau$ and $\theta$ to obtain involutions of the Lie algebra $\mathfrak{g}$; we will abuse notation and call these involutions $\tau$ and $\theta$ as well. Let $\mathfrak{g}=\mathfrak{k}\oplus \mathfrak{p}$ be the decomposition of $\mathfrak{g}$ into $+1$ and $-1$ eigenspaces of $\theta$, and let $\mathfrak{g}=\mathfrak{h}\oplus \mathfrak{q}$ be the decomposition of $\mathfrak{g}$ into $+1$ and $-1$ eigenspaces of $\mathfrak{g}$. Then we have the simultaneous decomposition
$$\mathfrak{g}=\mathfrak{k}\cap \mathfrak{h}\oplus \mathfrak{k}\cap \mathfrak{q}\oplus \mathfrak{p}\cap \mathfrak{h}\oplus \mathfrak{p}\cap \mathfrak{q}.$$

Let $s$ be a highest weight of $V_{\lambda-\widetilde{\rho}}|_{U(1)}$. Consider the $H$-equivariant map introduced in Proposition 2.1 and \cite{OV1}, \cite{OV2} 
$$r_l: L^2(D_n,\underline{V_{\lambda-\widetilde{\rho}}}\otimes T^{(0,q_{\lambda})}D_n)_{\overline{\Box}}\longrightarrow$$ $$L^2(D_{n-1},\operatorname{Hom}_{\mathbb{C}}(S_l(\mathfrak{p}\cap \mathfrak{q}),(\underline{V_{\lambda-\widetilde{\rho}}}\otimes T^{(0,q_{\lambda})}D_n)|_{D_{n-1}}))$$
where $l=|m-s|+1$.
We claim that the restriction of $r_l$ to $(\pi_{\lambda})_m$ is injective. To see this, let $F\in (\pi_{\lambda})_m$, and write $F=\sum F_{0,I}d\overline{w_I}+\sum F_{1,J} d\overline{w_J}\wedge d\overline{z}$. By Lemma 6.2, we know that the leading term of $F$ is holomorphic in $F_0$ and antiholomorphic in $F_1$. And by Lemma 6.3, we know that the terms in the Taylor expansions of $F_{0,I}$ and $F_{1,J}$ at zero in $z$ and $\overline{z}$ must be of the form $$z^i\overline{z}^{i+m-s}\ \text{or}\ z^i\overline{z}^{i+m-s+1}.$$
In particular, a holomorphic or antiholomorphic term cannot be of degree greater than $|m-s|+1$. Thus, $F$ cannot vanish to degree greater than $|m-s|+1$ at zero, and our map is indeed injective. As in section one, we utilize the Polar decomposition to obtain a unitary map
$$U_l:(\pi_{\lambda})_m\longrightarrow L^2(D_{n-1},\operatorname{Hom}_{\mathbb{C}}(S_l(\mathfrak{p}\cap \mathfrak{q}),(\underline{V_{\lambda-\widetilde{\rho}}}\otimes T^{(0,q_{\lambda})}D_n)|_{D_{n-1}})).$$
Now, suppose $G$ is a reductive Lie group of Harish-Chandra class, suppose $K\subset G$ is a maximal compact subgroup, suppose $(\eta,V)$ is a finite dimensional representation of $K$, and let $\mathcal{V}\rightarrow G/K$ be the corresponding $G$-equivariant vector bundle. Then $\sigma(\delta,\nu)$ occurs in $L^2(G/K,\mathcal{V})$ iff one of the irreducible constituents of $(\eta,V)$ occurs as a $K$ type of $\sigma(\delta,\nu)$ by Frobenius reciprocity. In addition, all such representations occur with finite multiplicity. One observes that there are a finite number of elliptic parameters $\delta$ such that $\sigma(\delta,\nu)$ contains a fixed $K$ type. Thus, $L^2(G/K,\mathcal{V})$ contains $\sigma(\delta,\nu)$ for a finite number of elliptic parameters $\delta$ and each $\sigma(\delta,\nu)$ occurs with finite multiplicity. 
Finally, since we have an injective, unitary map of $(\pi_{\lambda})_m$ into a finite number of such spaces, the Theorem follows.
\end{proof}

\section{Acknowledgments} The first author would like to thank all of the participants and organizers of the conference ``Branching Problems for Unitary Representations'' at the Max Planck Institute in 2011. Some conversations at that conference, especially ones with Jorge Vargas and Michel Duflo, contributed to the motivation behind beginning this project. The first author would also like to thank Toshiyuki Kobayashi for giving a course at Harvard in 2008 that first introduced him to branching problems.

\bibliographystyle{amsplain}

\end{document}